\documentclass[12pt]{amsart}
\usepackage{amsmath,amssymb,amsthm}
\usepackage{ marvosym }
\usepackage{tikz,tikz-cd}
\usepackage{framed}
\usepackage{lscape}
\usepackage{wasysym}
\usepackage[utf8]{inputenc}
\usepackage{mathtools}
\usepackage[makeroom]{cancel}
\usepackage{afterpage}
\usepackage{color}
\usepackage{enumerate}
\usepackage{amsfonts}
\usepackage{dsfont}
\usepackage{mathrsfs}
\newtheorem{definition}{Definition}
\newtheorem{theorem}[definition]{Theorem}
\newtheorem{lemma}[definition]{Lemma}
\newtheorem{proposition}[definition]{Proposition}

\newtheorem{example}[definition]{Example}
\usepackage{geometry} 
\geometry{a4paper} 
\geometry{margin=1in} 
\allowdisplaybreaks
\numberwithin{definition}{section}

\title{$c_2$ Invariants of Recursive Families of Graphs}
\author{Wesley Chorney and Karen Yeats}
 
\begin{document}

\thanks{Wesley Chorney was supported by an NSERC USRA.  Karen Yeats is supported by an NSERC Discovery grant.}

\begin{abstract}
The $c_2$ invariant, defined by Schnetz in \cite{Schnetz2011}, is an arithmetic graph invariant created towards a better understanding of Feynman integrals. \\
This paper looks at some graph families of interest, with a focus on decompleted toroidal grids. Specifically, the $c_2$ invariant for $p=2$ is shown to be zero for all decompleted non-skew toroidal grids.  We also calculate $c_2^{(2)}(G)$ for $G$ a family of graphs called X-ladders. Finally, we show these methods can be applied to any graph with a recursive structure, for any fixed $p$.
\end{abstract} 
\maketitle
\section{Introduction}
Given a connected, 4-regular graph $\Gamma$, let $G=\Gamma\backslash v$, where $v\in V(\Gamma)$. We call $G$ a decompletion of $\Gamma$ and write $G=\widetilde{\Gamma}$. In this way, $G$ can be thought of as a Feynman graph in $\phi^4$ theory with four external edges. Note that in general, this is bad notation since the decompletion of a graph is non-unique. However, for the graphs appearing in this document, all decompletions but one (see $\S \ref{sec xladder}$) are isomorphic and so the decompletion operation is well-defined. \\
\begin{definition}Assign to each edge $e\in G$ a variable $\alpha_e$. The \textbf{Kirchhoff polynomial} of $G$ is 
	\[\Psi_G=\sum_{T}\prod_{e\not\in T}\alpha_e,\]
	where the sum is over all spanning trees in $G$.
\end{definition}
We use the Kirchhoff polynomial to define the Feynman period of $G$ as 
\[\int_{\alpha_i\geq0}\frac{\Omega}{\Psi_G^2}\]
where $\Omega=\sum_i^{|E(G)|}(-1)^{i-1}d\alpha_1\cdots \widehat{d\alpha_i}\cdots d\alpha_{|E(G)|}$, and $\widehat{d\alpha_i}$ corresponds to the differential not appearing in the product. The Feynman period is interesting both quantum field theoretically and mathematically.  From the point of view of quantum field theory it is an important part of the complete Feynman integral (see \cite{Schnetz2010}).  More mathematically, the Feynman period is the right kind of object to try to understand with algebro-geometric tools.  There has been substantial work over the last decade taking this approach, see \cite{BlochEsnaultKreimer2006, BroadhurstSchnetz2014, Brown2009, Marcolli2010}. Schnetz \cite{Schnetz2011} defined the $c_2$ invariant, given below, in order to better understand these integrals.\\
\begin{definition}Let $p$ be a prime, $\mathds{F}_p$ the finite field with $p$ elements, and let $[\Psi_G]_p$ denote the cardinality of the affine algebraic variety of $\Psi_G$ over $\mathds{F}_p$. Further, suppose $G$ has at least 3 vertices. Then the \textbf{$c_2$ invariant of $G$ at $p$} is
	\[c_2^{(p)}(G)=\frac{[\Psi_G]_p}{p^2}\bmod p\]
\end{definition}
The $c_2$ invariant is well-defined provided $G$ has at least three vertices \cite{Schnetz2011}. It is or is predicted to be invariant under the symmetries of the Feynman period \cite{BrownSchnetz2012, Doryn2013}; knowing the $c_2$ invariant provides important information about the Feynman period. \\ \\
The graphs of main interest in this document are toroidal grids (specifically 2-dimensional ones), which are interesting not only from a quantum field theory perspective, but also graph theoretically. For instance, as shown in \cite{DeGraafShrijver1994}, any graph with face width $r\geq5$ embedded on a torus contains a certain toroidal grid as a minor. Furthermore, their maximal run length \cite{Doig2004} and bent Hamilton cycle properties \cite{RuskeySawada2003} have been investigated.
\begin{definition}
  A \textbf{toroidal grid} is a graph defined in the following way.  Given two integer vectors $(k,0)$, $(l,m)$ with $k, m \geq 3$ and $l\geq 0$, take the integer lattice points in the first quadrant with edges joining lattice points at distance $1$.  The result of this modulo the relation which identifies two lattice points if their difference is $(k, 0)$ or $(l, m)$ is the toroidal grid indexed by $(k, 0)$ and $(l, m)$.
\end{definition}

A toroidal grid with $l \neq 0$ is called a \textbf{skew} toroidal grid while those with $l = 0$ are \textbf{non-skew}.

\begin{proposition}
	The toroidal grid indexed by $(k, 0)$ and $(0, m)$ is a Cartesian product of cycles $\gamma_k \times \gamma_m$, where $\gamma_i$ is the cycle on $i$ vertices.  
\end{proposition}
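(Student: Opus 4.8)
The plan is to exhibit an explicit graph isomorphism between the toroidal grid $\Gamma$ indexed by $(k,0)$ and $(0,m)$ and the Cartesian product $\gamma_k \times \gamma_m$, by tracking what the quotient construction does on vertices and then on edges.

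First I would pin down the vertex set. By definition the vertices of $\Gamma$ are the first-quadrant lattice points modulo the subgroup $\Lambda = \langle (k,0),(0,m)\rangle \leq \mathbb{Z}^2$; since every point of $\mathbb{Z}^2$ is congruent modulo $\Lambda$ to one in the first quadrant, this vertex set is canonically $\mathbb{Z}^2/\Lambda$, which as an abelian group is $(\mathbb{Z}/k\mathbb{Z}) \times (\mathbb{Z}/m\mathbb{Z})$. Write $(\bar a,\bar b)$ for the class of $(a,b)$, with $\bar a \in \mathbb{Z}/k\mathbb{Z}$ and $\bar b \in \mathbb{Z}/m\mathbb{Z}$. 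On the product side, identify $V(\gamma_k)$ with $\mathbb{Z}/k\mathbb{Z}$ so that the edges of $\gamma_k$ are exactly the pairs $\{i,i+1\}$, and similarly $V(\gamma_m)$ with $\mathbb{Z}/m\mathbb{Z}$; then $V(\gamma_k \times \gamma_m) = (\mathbb{Z}/k\mathbb{Z}) \times (\mathbb{Z}/m\mathbb{Z})$ as well, and the candidate isomorphism is the identity map on these labelled vertex sets.

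Next I would match the edges. Two first-quadrant lattice points are adjacent in the pre-quotient grid precisely when they differ by $(\pm 1,0)$ or $(0,\pm 1)$, so after passing to the quotient the edges of $\Gamma$ are the pairs $\{(\bar a,\bar b),(\overline{a+1},\bar b)\}$ and $\{(\bar a,\bar b),(\bar a,\overline{b+1})\}$. This is where the standing hypotheses $k,m\geq 3$ are used: they guarantee $(\overline{a+1},\bar b)\neq(\bar a,\bar b)$ and $(\bar a,\overline{b+1})\neq(\bar a,\bar b)$ (no loops) and that two distinct lattice edges never descend to the same pair of classes (no parallel edges), so $\Gamma$ is the simple graph with exactly the edge set just described. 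By the definition of the Cartesian product, $\gamma_k \times \gamma_m$ joins $(i,j)$ to $(i',j')$ iff either $j=j'$ and $\{i,i'\}\in E(\gamma_k)$, or $i=i'$ and $\{j,j'\}\in E(\gamma_m)$ — i.e.\ exactly the pairs $\{(i,j),(i+1,j)\}$ and $\{(i,j),(i,j+1)\}$. These coincide with the edges of $\Gamma$ under the vertex labelling above, so the identity map is a graph isomorphism.

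I do not anticipate a genuine difficulty here; the only point needing care is the bookkeeping around the quotient, namely checking that identifying lattice points really produces a simple graph with the stated edge set, which is precisely what the assumptions $k,m\geq 3$ ensure. Once both vertex sets are identified with $(\mathbb{Z}/k\mathbb{Z}) \times (\mathbb{Z}/m\mathbb{Z})$, the edge comparison is immediate.
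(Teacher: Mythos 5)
Your proof is correct, and it is simply a fully written-out version of what the paper does, since the paper's proof is the one-line observation that the claim follows directly from the definition. Your only added content is the routine bookkeeping (vertex set as $(\mathbb{Z}/k\mathbb{Z})\times(\mathbb{Z}/m\mathbb{Z})$, edge matching, and the use of $k,m\geq 3$ to rule out loops and parallel edges), which is exactly the verification the paper leaves implicit.
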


\begin{proof}
This follows directly from the definition.
\end{proof}


\begin{example}
	Let $\mathbf{x}=(3,0)$ and $\mathbf{y}=(0,3)$. Figure \ref{fig eg} shows the lattice and resulting graph. Notice the graph corresponds exactly to $\gamma_3\times \gamma_3$. \\
\end{example}

	\begin{figure}[h]
	\includegraphics[scale=0.8]{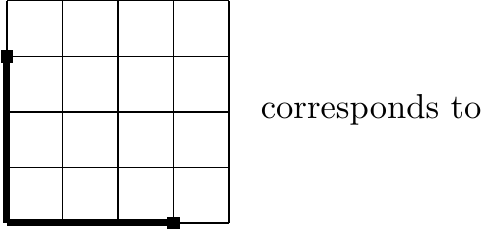}
	\includegraphics[scale=1]{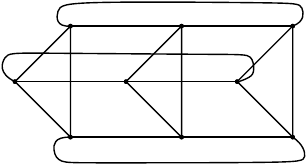} 
	\caption{Example of a toroidal grid}\label{fig eg}
	\end{figure}
	
        The sources above involving toroidal grids do not include skew toroidal grids in the definition. 
        
        With appropriate relative primality of the parameters, toroidal grids can also be understood as certain circulant graphs.
        \begin{definition}
          The \textbf{circulant graph} $C_n(i_1, i_2, \ldots, i_k)$ is the graph on $n$ vertices with an edge between vertices $i$ and $j$ iff $i-j \equiv i_\ell \mod n$ or $j-i \equiv i_\ell \mod n$ for some $\ell$.
        \end{definition}
        
\begin{proposition}
	Let $G$ be a skew toroidal grid, parametrized by $\mathbf{x}=(k,0), \mathbf{y}=(l,m)$ with $l > 0$ and $\gcd(m, l)=1$. Then $G$ is isomorphic to the circulant graph $C_{km}(l,m)$. 
\end{proposition}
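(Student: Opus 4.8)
The plan is to produce an explicit isomorphism of the underlying vertex groups and verify that it matches the two edge structures. Write $\Lambda=\langle(k,0),(l,m)\rangle\subseteq\mathbb{Z}^2$; then the vertex set of $G$ is the quotient group $A:=\mathbb{Z}^2/\Lambda$, and two classes are joined by an edge exactly when their difference is one of $\pm\overline{(1,0)},\pm\overline{(0,1)}$, i.e.\ $G$ is the Cayley graph of $A$ with this connection set. The index $[\mathbb{Z}^2:\Lambda]$ equals $km$, the determinant of the matrix with rows $(k,0)$ and $(l,m)$, so $|A|=km$. Using $k,m\geq 3$ one checks the four connection elements are distinct and nonzero in $A$ --- for instance $\overline{(1,0)}=\overline{(0,1)}$ would force $(1,-1)\in\Lambda$ and hence $m\mid 1$ --- so $G$ really is a simple $4$-regular graph, and $C_{km}(l,m)$ is simple as well; this is what makes the Cayley-graph comparison legitimate.

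Next I would define $\phi\colon\mathbb{Z}^2\to\mathbb{Z}/km\mathbb{Z}$ by $\phi(a,b)=ma-lb\bmod km$. Since $\phi(k,0)=mk\equiv 0$ and $\phi(l,m)=ml-lm=0$, we have $\Lambda\subseteq\ker\phi$, so $\phi$ factors through a homomorphism $\bar\phi\colon A\to\mathbb{Z}/km\mathbb{Z}$. Its image is the subgroup generated by $\phi(1,0)=m$ and $\phi(0,1)=-l$, which is all of $\mathbb{Z}/km\mathbb{Z}$ because $\gcd(m,l,km)=\gcd(m,l)=1$; hence $\bar\phi$ is a surjection between groups of the same finite order $km$, i.e.\ an isomorphism. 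Moreover $\bar\phi$ carries the connection set of $G$ onto $\{\pm\phi(1,0),\pm\phi(0,1)\}=\{\pm m,\pm l\}$, which is precisely the connection set of $C_{km}(l,m)=\mathrm{Cay}(\mathbb{Z}/km\mathbb{Z},\{\pm l,\pm m\})$.

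An isomorphism of groups taking one symmetric, zero-free connection set to another induces an isomorphism of the corresponding Cayley graphs, since adjacency is a condition on differences and $\bar\phi$ preserves it in both directions by bijectivity; hence $G\cong C_{km}(l,m)$. I do not anticipate a genuine obstacle here: the proof hinges only on guessing the right linear form $\phi$ and on the elementary fact that $\gcd(l,m)=1$ makes $A$ cyclic of order $km$ with $\overline{(1,0)},\overline{(0,1)}$ mapping to $m$ and $-l$. The one point needing a little care is the bookkeeping in the first paragraph confirming that both graphs are genuinely simple $4$-regular graphs, which is exactly where the hypotheses $k,m\geq 3$ and $\gcd(l,m)=1$ enter.
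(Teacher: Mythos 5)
Your proof is correct. In substance it produces the same isomorphism as the paper: the paper labels the vertex $(a,b)$ with $1+am+(m-b-1)l \bmod km$, which up to an additive constant and a sign is exactly your linear form $\phi(a,b)=ma-lb \bmod km$. The difference is in how the verification is organized. The paper checks by hand that the labelling is a bijection (each row of the grid uses one residue class mod $m$, and $\gcd(l,m)=1$ makes these classes exhaust everything) and then runs through the four families of edges, including the wrap-around ones, to see that they give precisely the gap-$m$ and gap-$l$ edges of the circulant. You instead view the toroidal grid as the Cayley graph of $\mathbb{Z}^2/\langle(k,0),(l,m)\rangle$ with connection set $\{\pm(1,0),\pm(0,1)\}$, note that $\phi$ kills the lattice and is surjective because $\gcd(l,m)=1$, and conclude it is a group isomorphism by comparing orders ($km$ on both sides, via the determinant); the graph isomorphism then follows since a group isomorphism carrying one symmetric connection set onto the other induces a Cayley-graph isomorphism. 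Your route buys a cleaner bijectivity argument and dispenses with the edge-by-edge bookkeeping, at the small cost of having to justify that the quotient in the paper's definition really is $\mathbb{Z}^2/\Lambda$ with adjacency given by differences $\pm(1,0),\pm(0,1)$, and that no loops or multiple edges arise --- which you address using $k,m\geq 3$, and which the paper's concrete labelling sidesteps by construction. Both arguments use the hypotheses in the same places, and your identification of where $\gcd(l,m)=1$ and $k,m\geq 3$ enter is accurate.
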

\begin{proof}
  Let $G$ be the skew toroidal grid parametrized by $(k,0)$ and $(l,m)$.
  Take the integer lattice points in the first quadrant with $x$ coordinate less than $k$ and $y$ coordinate less than $m$ as representatives for the vertices of $G$.

  Next we will label the vertices of $G$ with $\{1, \ldots, km\}$ so as to indicate the circulant structure.  Let the vertex $(a,b)$ be labelled with $1 + am + (m-b-1)l \mod km$, see Figure~\ref{fig tor lab}.   Every label is used exactly once because row $b$ of the grid uses precisely the labels congruent to $(m-b-1)l+1$ modulo $m$ and since $\gcd(m, l)=1$ this runs over all the equivalence classes as $b$ runs over $0\leq b < m$.

  The horizontal edges of the grid connect $1+am+(m-b-1)l$ with $1+(a+1)m + (m-b-1)l$ for $0 \leq a < k-1$ and $0 \leq b < m$.  Additionally from the horizontal toroidality we have edges connecting $1+(k-1)m+(m-b-1)l$ with $1+(m-b-1)l$ for $0\leq b < m$.  This gives all the gap $m$ edges for the circulant structure.  The vertical edges of the grid connect $1+am+(m-b-1)l$ with $1+am+(m-b)l$ for $0\leq b < m-1$ and $0\leq a < k$.  The remaining toroidality gives edges connecting $1+am$ and $1+(a-l)m+(m-1)l = 1+am -l$ which gives all the gap $l$ edges for the circulant structure.  This accounts for all the edges of $G$.
%
   %
\end{proof}
\begin{figure}[h]
	\includegraphics[scale=1.0]{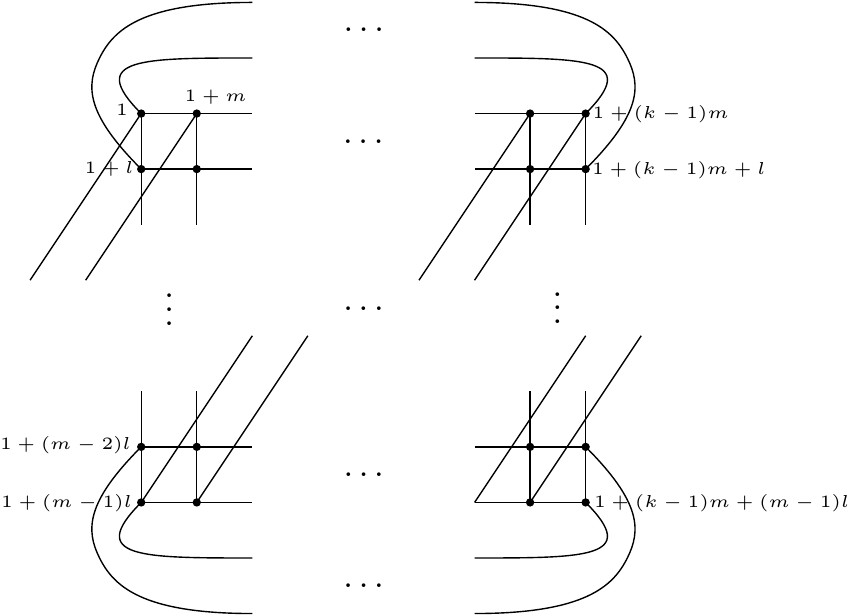}
	\caption{Toroidal Grid Labelling.}\label{fig tor lab}
\end{figure}

These are examples of the kinds of families of circulant graphs whose $c_2$ invariants were studied in \cite{Yeats2016}.  In particular, from \cite{Yeats2016} we know that $c_2^{(2)}(\widetilde{C}_n(1,3)) \equiv n \mod 2$ for $n\geq 7$ so the decompleted toroidal grid parametrized by $(k, 0)$, $(1, 3)$ has $c_2^{(2)} \equiv 3k \equiv k \mod 2$ for all $k > 2$.  

There is a similar result for non-skew toroidal grids.
\begin{proposition}
	Let $G$ be a non-skew toroidal grid, parametrized by $\mathbf{x}=(k,0), \mathbf{y}=(0,m)$ with $\gcd(m, k)=1$. Then $G$ is isomorphic to the circulant graph $C_{km}(k,m)$. 
\end{proposition}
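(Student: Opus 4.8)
The plan is to follow the same strategy as the proof of the preceding proposition: exhibit an explicit labelling of the vertices of $G$ by $\mathbb{Z}/km\mathbb{Z}$, which we identify with $\{1,\dots,km\}$, under which the edges of $G$ become exactly the gap-$k$ and gap-$m$ edges of the circulant. Using the lattice description, take the points $(a,b)$ with $0\le a<k$ and $0\le b<m$ as representatives for $V(G)$, so the grid edges join $(a,b)$ to $(a+1,b)$ and to $(a,b+1)$, and the toroidality adds edges joining $(k-1,b)$ to $(0,b)$ and $(a,m-1)$ to $(a,0)$. Define the label of $(a,b)$ to be $1+am+bk \bmod km$. A direct computation then shows that every edge of $G$ changes the label by $\pm m$ (the horizontal edges, including the wrap $(k-1,b)\to(0,b)$, since $-(k-1)m\equiv m \bmod km$) or by $\pm k$ (the vertical edges, including the wrap $(a,m-1)\to(a,0)$, since $-(m-1)k\equiv k \bmod km$), so that each edge of $G$ maps to an edge of $C_{km}(k,m)$.

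The crux is that this labelling is a bijection, and this is the one place the hypothesis $\gcd(k,m)=1$ enters. The assignment $(a,b)\mapsto am+bk$ descends to a group homomorphism $\mathbb{Z}/k\mathbb{Z}\times\mathbb{Z}/m\mathbb{Z}\to\mathbb{Z}/km\mathbb{Z}$, since increasing $a$ by $k$ or $b$ by $m$ changes $am+bk$ by a multiple of $km$. If $am+bk\equiv 0\bmod km$, then reducing modulo $k$ gives $am\equiv 0\bmod k$, hence $a\equiv 0\bmod k$ because $\gcd(m,k)=1$; symmetrically $b\equiv 0\bmod m$. So the homomorphism has trivial kernel, and as both sides have $km$ elements it is a bijection; translating by $1$ keeps it a bijection, so the labelling identifies $V(G)$ with $\mathbb{Z}/km\mathbb{Z}$.

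Finally, one checks that every gap-$k$ and gap-$m$ edge of $C_{km}(k,m)$ actually arises. Given a vertex with label $\ell=1+am+bk$, the circulant neighbour $\ell+m$ is the label of $(a+1,b)$ when $a<k-1$ and of $(0,b)$ when $a=k-1$, so every gap-$m$ edge is the image of a horizontal grid edge; likewise every gap-$k$ edge is the image of a vertical grid edge. Hence the vertex bijection carries $E(G)$ onto $E(C_{km}(k,m))$, giving $G\cong C_{km}(k,m)$. I do not expect a genuine obstacle: this is a simpler variant of the skew case, the only subtlety being the coprimality input to the bijection (if one prefers an edge-count argument instead, note that $k\not\equiv\pm m$ and $2k,2m\not\equiv 0\bmod km$ since $k,m\ge 3$, so $C_{km}(k,m)$ is genuinely $4$-regular with $2km$ edges, matching $G$). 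Alternatively, the statement can be deduced directly from the isomorphism $G\cong\gamma_k\times\gamma_m$ of the earlier proposition by choosing the Chinese Remainder isomorphism $\mathbb{Z}/k\mathbb{Z}\times\mathbb{Z}/m\mathbb{Z}\cong\mathbb{Z}/km\mathbb{Z}$ sending $(1,0)\mapsto m$ and $(0,1)\mapsto k$.
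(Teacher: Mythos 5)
Your proof is correct and follows essentially the same route as the paper: the identical labelling $(a,b)\mapsto 1+am+bk \bmod km$, with coprimality of $k$ and $m$ used to show the labelling is a bijection and a check that the grid edges become precisely the gap-$k$ and gap-$m$ circulant edges. Your bijectivity argument (via the kernel of the homomorphism, or the Chinese Remainder Theorem) is just a slightly more explicit version of the paper's row-by-row counting, so there is no substantive difference.
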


\begin{proof}
Similarly to the previous proposition let $G$ be the toroidal grid parametrized by $\mathbf{x}=(k,0), \mathbf{y}=(0,m)$ and take the integer lattice points in the first quadrant with $x$ coordinate less than $k$ and with $y$ coordinate less than $m$ as representatives for the vertices of $G$.  Again we will label the vertices of $G$ with $\{1, \ldots, km\}$ so as to indicate the circulant structure.

Specifically, label the vertex $(a,b)$ with $1+am+bk \mod km$.  Row $b$ uses the labels congruent to $1+bk \mod m$.  Since $\gcd(k,m)=1$ every label occurs exactly once in the graph and the cycles for each row give the edges linking vertices at distance $m$ in the circulant structure.  The same argument with $k$ and $m$ reversed gives that the column cycles give the edges linking the vertices at distance $k$ in the circulant structure and this accounts for all the edges of $G$.
\end{proof}

Note that in the non-skew case this does not give a family of circulants of the form studied in \cite{Yeats2016} because of how the gap parameters depend on the size.  So for the purposes of the $c_2$ invariant these are new graphs to consider and are the main object of study of this paper.

\medskip

In this document, we first define and give some preliminary results, in order to move from an algebraic incarnation of the $c_2$ invariant towards a graph-theoretic or combinatorial understanding. Then, with these methods, we compute $c_2^{(2)}(G)$ where $G$ is a toroidal grid of arbitrary length constructed from $N$-cycles, for $N\geq 3$. We also use these methods to show $c_2^{(2)}(G)=0$ when $G$ is a capped X-ladder, a result already known but proved easily via these methods. Finally, we show that for any recursive family of graphs and any fixed prime $p$, the $c_2$ invariant can be computed for all graphs of the family by a finite procedure using these methods --- giving the possibility of an (unfortunately inefficient) algorithm. \\

\section{Graph polynomials}
Herein, we define a slew of polynomials which will be useful in moving towards a graph-theoretic understanding of the $c_2$ invariant. By the matrix-tree theorem, we can express $\Psi_G$ as a determinant as follows. Choosing an arbitrary orientation of the edges of $G$, let $E$ be the signed incidence matrix (with rows indexing vertices and columns indexing edges) with one row removed. Let $\Lambda$ be the matrix with the edge variables of $G$ on the diagonal and zeroes elsewhere. Let 
\[M=\begin{bmatrix}
\Lambda & E^T \\
-E & 0
\end{bmatrix}\]
Then 
\[\Psi_G=\det M.\]
The proof can be found in \cite{Brown2009}, where the determinant is expanded, or in \cite{VlasevYeats2012}, using the Schur complement and Cauchy-Binet formula. \\
Let $I$ and $J$ be sets of indices, and $M(I,J)$ the matrix $M$ with rows indexed by elements of $I$ and columns indexed by elements of $J$ removed. Then we can define \textbf{Dodgson polynomials} as did Brown in \cite{Brown2009}.\\
\begin{definition}
	Let $I,J,K$ be subsets of ${1,2,\dots,|E(G)|}$, and let $|I|=|J|$. Then
	\[\Psi_{G,K}^{I,J}=\det M(I,J)|_{a_e=0, e\in K}\]
\end{definition}
If the graph is made clear from the context, we leave out the $G$ subscript. Similarly, if $K$ is empty, we leave it out as well. Note that if $e\in I\cap J$, $e\not\in K$, then both the row and column corresponding to $e$ are removed. This is equivalent to $e$ not being in the graph. Specifically,
\[\Psi_{G,K}^{Ie,Je}=\Psi_{G\backslash e, K}^{I,J}\]
Similarly, if $e\in K$, $e\not\in I\cup J$, then edge $e$ is set to zero, but not removed from the matrix. That is, we are taking only those monomials where $e$ does not appear --- equivalently, those monomials where $e$ is not cut in the spanning structure. Specifically, 
\[\Psi_{G,Ke}^{I,J}=\Psi_{G/e,K}^{I,J}\]
These equivalences simplify some steps in the $c_2$ calculations to follow and should be kept in mind by the reader. \\
Dodgson polynomials can be expressed in terms of spanning forests. The following \textbf{spanning forest polynomials} allow us to do so in a relatively straightforward manner. 
\begin{definition}
	Let $P$ be a set partition of a subset of $V(G)$. Define
	\[\Phi_G^P=\sum_F\prod_{e\not\in F}\alpha_e\]
	where the sum runs over all spanning forests $F$ of $G$ with a bijection between the trees of $F$ and the parts of $P$, and where vertices belonging to a part lie in their corresponding tree.
\end{definition}
Note that trees consisting of a single vertex are allowed. We illustrate vertices belonging to different parts by using differing large vertex shapes. 
\begin{example}
	Figure \ref{fig sp for G} shows a graph $G$ with illustrated partition $P=\{\blacksquare, \ocircle\}$. The resulting spanning forest polynomial is
	\[\Phi_G^P=c(de + ae + bd + ab)+ab(e+d)+de(a+b)\]
\end{example}
\begin{figure}[h]
\includegraphics{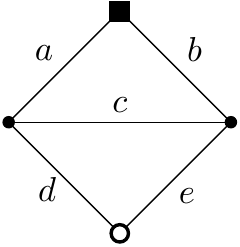}
\caption{$G$ for spanning forest example.}\label{fig sp for G}
\end{figure}
The expression for Dodgson polynomials in terms of spanning forest polynomials is given in \cite{BrownYeats2011} by the following proposition. 
\begin{proposition}\label{prop sp for}
	Let $I$, $J$, $K$ be sets of edge indices of $G$ with $|I|=|J|$. Then
	\[\Psi_{G,K}^{I,J}=\sum_P \pm\Phi_{G\backslash (I\cup J\cup K)}^P\]
	where the sum runs over all set partitions $P$ of the endpoints of the edges in $(I\cup J\cup K)\backslash(I\cap J)$ with the additional property that all forests corresponding to $P$ become trees in both $G\backslash I/(J\cup K)$ and $G\backslash J/(I\cup K)$.
\end{proposition}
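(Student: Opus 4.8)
The plan is to compute $\Psi_{G,K}^{I,J}$ directly from its definition as a minor of $M$ and then match the answer term by term against a sum of spanning forest polynomials. First I would make two harmless reductions. If $e\in I\cap J$ then $\Psi_{G,K}^{Ie,Je}=\Psi_{G\backslash e,K}^{I,J}$ deletes $e$ from $G$ and from $I,J$ simultaneously, and on the other side of the claimed identity this simply deletes $e$ from $G\backslash(I\cup J\cup K)$ and removes its endpoints from consideration; so we may assume $I\cap J=\emptyset$. If $e\in K\cap(I\cup J)$ then the row (or column) of $M$ indexed by $e$ has already been deleted in forming $M(I,J)$, so $\alpha_e$ does not occur in $M(I,J)$ and setting it to $0$ does nothing; so we may also assume $K$ is disjoint from $I\cup J$. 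After these reductions $I,J,K$ are pairwise disjoint and $(I\cup J\cup K)\backslash(I\cap J)=I\cup J\cup K$. Put $n=|V(G)|$ and recall $\Psi_{G,K}^{I,J}=\det M(I,J)\big|_{\alpha_e=0,\ e\in K}$, where $M(I,J)$ has $n-1$ ``vertex'' rows and columns (from $E$, resp.\ $E^T$), the remaining ``edge'' rows and columns carrying the rectangular diagonal block whose only nonzero entries are $\alpha_e$ in position $(e,e)$ for $e\notin I\cup J$.

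Next I would expand $\det M(I,J)$ by the generalized Laplace expansion along the edge/vertex block split. In any nonzero term each of the $n-1$ vertex rows must be matched to an edge column, because the bottom-right block of $M$ is zero; this uses up $n-1$ edge columns, and the diagonal block forces every remaining edge column to be matched to the edge row with the same label. Bookkeeping this forced shape shows the nonzero terms are indexed by subsets $F\subseteq E(G)\backslash(I\cup J)$ with $|F|=n-1-|I|$, the term equal to
\[
\pm\Big(\prod_{e\notin I\cup J\cup F}\alpha_e\Big)\,\det(E_{I\cup F})\,\det(E_{J\cup F}),
\]
where $E_S$ is the square submatrix of $E$ on the columns indexed by $S$; each of these incidence minors equals $\pm1$ or $0$ according as $I\cup F$, resp.\ $J\cup F$, is or is not a spanning tree of $G$ (and taking $I=J=K=\emptyset$ recovers the matrix--tree theorem, as a sanity check). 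Imposing $\alpha_e=0$ for $e\in K$ kills every term whose monomial still contains some $\alpha_e$ with $e\in K$, i.e.\ forces $K\subseteq F$; writing $F=K\sqcup F'$ with $F'\subseteq E(G)\backslash(I\cup J\cup K)$, disjointness turns the monomial into $\prod_{e\notin I\cup J\cup K\cup F'}\alpha_e$ and turns the two spanning-tree conditions into exactly ``$F'$ becomes a tree in $G\backslash I/(J\cup K)$'' and ``$F'$ becomes a tree in $G\backslash J/(I\cup K)$''. Such an $F'$ is a spanning forest of $G\backslash(I\cup J\cup K)$, and the two contraction conditions force each of its trees to contain an endpoint of $I\cup J\cup K$.

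Finally I would group the surviving terms by the set partition $P$ of the endpoints of $I\cup J\cup K$ that $F'$ induces (two endpoints in the same part iff the same tree of $F'$). Since the two contraction conditions depend on $F'$ only through $P$, the partitions that occur are exactly the ones described in the statement, and for a fixed such $P$ the admissible $F'$ are exactly the spanning forests of $G\backslash(I\cup J\cup K)$ realizing $P$; hence the terms with a common $P$ sum to $\pm\Phi_{G\backslash(I\cup J\cup K)}^{P}$ provided the sign attached to each admissible $F'$ depends only on $P$. Checking that last point is the crux of the argument: one has to trace the permutation sign of each Laplace term together with the two incidence minors $\det(E_{I\cup F})$ and $\det(E_{J\cup F})$ and show that replacing $F'$ by another forest in the same $P$-class changes the Laplace sign and the product of the two minors by compensating amounts. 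This is the standard but fiddly orientation-and-ordering bookkeeping, and it is the step I expect to be the main obstacle; granting it, summing over $P$ yields the stated formula.
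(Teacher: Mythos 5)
Your overall route is the right one, and it is essentially the route of the literature: the paper itself does not prove this proposition but quotes it from \cite{BrownYeats2011}, and the proof there likewise rests on the all-minors/matrix-tree expansion of $\det M(I,J)$ that you carry out. Your reductions to $I\cap J=\emptyset$ and $K\cap(I\cup J)=\emptyset$ are fine (since $\alpha_e$ sits only at the diagonal entry $(e,e)$, it disappears as soon as either the row or column $e$ is deleted), your block-Laplace bookkeeping correctly yields terms $\pm\bigl(\prod_{e\notin I\cup J\cup F}\alpha_e\bigr)\det(E_{I\cup F})\det(E_{J\cup F})$ with the two minors detecting the spanning-tree conditions, and setting $\alpha_e=0$ for $e\in K$ does force $K\subseteq F$ and translate into exactly the two contraction/deletion conditions in the statement, which indeed depend on $F'$ only through the induced partition $P$ (forests with a component meeting no endpoint are killed on both sides).

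The genuine gap is the point you yourself flag and then grant: that for a fixed admissible partition $P$ the sign (Laplace sign times $\det(E_{I\cup F})\det(E_{J\cup F})$) is the same for every forest $F'$ inducing $P$. This is not routine tidying-up; it is precisely the content of the Brown--Yeats proposition beyond the classical all-minors matrix-tree theorem, and without it your grouping step only produces a signed sum over individual forests, in which forests within one $P$-class could in principle cancel, so the identity $\Psi^{I,J}_{G,K}=\sum_P\pm\Phi^P_{G\setminus(I\cup J\cup K)}$ with a single sign per partition is not yet established. A complete argument must show, e.g.\ by tracking how the incidence minors change under exchanging one forest in a $P$-class for another (or by the explicit sign formula in \cite{BrownYeats2011}), that the sign is a function of $P$ alone. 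Two mitigating remarks: for everything this paper actually uses the computation is modulo $2$, where your argument as written already suffices; and your unsigned expansion is correct and independently verifiable (the $I=J=K=\emptyset$ sanity check recovering $\Psi_G$ is apt). But as a proof of the proposition as stated, the sign-coherence step must be supplied, not assumed.
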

\cite{BrownYeats2011} also shows how to determine the sign. However, we will do computations modulo 2 and so sign is irrelevant. This proposition is how spanning forest polynomials typically arise for us. Again, when the graph is clear, we will leave out the subscript. In this case, it is assumed that the graph we are working with is $G$ with all necessary edges left out. \\
It remains to give an expression by which the $c_2$ invariant can be calculated. Once again, Dodgson polynomials are useful for this purpose.
\begin{definition}
	Let $i,j,k,l,m$ be distinct edge indices of $G$. Then the \textbf{5-invariant} of $G$ depending on $i,j,k,l,m$ is
	\[^5\Psi(i,j,k,l,m)=\pm(\Psi_m^{ij,kl}\Psi^{ikm,jlm}-\Psi_m^{ik,jl}\Psi^{ijm,klm})\]
\end{definition}
Up to sign, this is independent of the order of $i,j,k,l,m$; as shown in Lemma 87 of \cite{Brown2009}.\\
Now, following from Lemma 24 and Corollary 28 of \cite{BrownSchnetz2012}, we have these expressions for the $c_2$ invariant:
\begin{proposition}\label{prop calculate c2}
	Suppose $G$ satisfies $2+|E(G)| \leq 2|V(G)|$. Let $i,j,k,l,m$ be distinct edge indices of $G$, and let $p$ be a prime. Then
	\begin{enumerate}
	\item $c_2^{(p)}(G)=-[\Psi_k^{i,j}\Psi^{ik,jk}]_p \bmod p$
	\item $c_2^{(p)}(G)=[\Psi^{ij,kl}\Psi^{ik,jl}]_p \bmod p$
	\item $c_2^{(p)}(G)=-[^5\Psi(i,j,k,l,m)]_p \bmod p$
	\end{enumerate}
	Once again, $[\cdot]_p$ denotes the cardinality of the affine variety over $\mathds{F}_p$. 
\end{proposition}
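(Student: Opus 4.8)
\medskip
\noindent\emph{Proof proposal.}
The starting point is the definition $c_2^{(p)}(G)=[\Psi_G]_p\,p^{-2}\bmod p$: the task is to show that each of the three Dodgson expressions equals $[\Psi_G]_p/p^{2}$ modulo $p$, which in particular presupposes that the point counts in question are divisible by the appropriate powers of $p$. The hypothesis $2+|E(G)|\leq 2|V(G)|$ is what secures this. For connected $G$ it is equivalent to $|E(G)|\geq 2h_1(G)$, where $h_1(G)=|E(G)|-|V(G)|+1=\deg\Psi_G$, and in this regime the Chevalley--Warning / Ax--Katz $p$-adic bounds, sharpened using the projective and Cremona structure of $\Psi_G$ as in \cite{BrownSchnetz2012}, give $p^{2}\mid[\Psi_G]_p$ together with the analogous divisibilities for the Dodgson polynomials that enter the reduction.

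The engine of the argument is an elementary Fubini-type counting lemma: if $F\in\mathds{F}_p[x_1,\dots,x_N]$ is affine-linear in $x_1$, write $F=x_1A+B$ with $A,B$ not involving $x_1$; then
\[
[F]_p=p^{N-1}-[A]_p+p\,[A,B]_p,
\]
where $[A,B]_p$ counts the common zeros of $A$ and $B$ in the other $N-1$ variables. Since $\Psi_G$ and every Dodgson polynomial is affine-linear in each edge variable, this may be applied repeatedly. Starting from the contraction--deletion expansion $\Psi_G=\alpha_i\Psi^{i,i}+\Psi_i$ (with $\Psi^{i,i}=\Psi_{G\backslash i}$, $\Psi_i=\Psi_{G/i}$), then expanding $\Psi^{i,i}$ and $\Psi_i$ in $\alpha_j$, and invoking the Dodgson (Jacobi/Lewis Carroll) identity $\Psi\,\Psi^{ij,ij}=\Psi^{i,i}\Psi^{j,j}-\Psi^{i,j}\Psi^{j,i}$ and its three-index companion, one rewrites $[\Psi_G]_p$ as a combination of point counts of products of Dodgson polynomials, each weighted by a power of $p$; the identities force the low-order contributions to cancel, and modulo $p^{3}$ a single term, carrying a factor $p^{2}$, survives. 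One further elimination in $\alpha_k$, with the matching Dodgson identity, identifies it as $-p^{2}[\Psi_k^{i,j}\Psi^{ik,jk}]_p$, which is (1) after dividing by $p^{2}$.

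Statements (2) and (3) are then obtained by feeding this output back into the same machine. Both $\Psi_k^{i,j}$ and $\Psi^{ik,jk}$ are affine-linear in a fifth edge variable $\alpha_l$, so applying the counting lemma to the product $\Psi_k^{i,j}\Psi^{ik,jk}$ (equivalently, to the union of the two hypersurfaces) and using the Dodgson identity relating $\Psi^{ij,kl}$, $\Psi^{ik,jl}$, $\Psi^{il,jk}$ converts $[\Psi_k^{i,j}\Psi^{ik,jk}]_p$ into $-[\Psi^{ij,kl}\Psi^{ik,jl}]_p$ modulo $p$, giving (2). Repeating once more with a sixth edge $\alpha_m$ and collecting terms, the combination that survives is exactly $-{}^5\Psi(i,j,k,l,m)$; its invariance under permutations of $i,j,k,l,m$ up to sign is Lemma~87 of \cite{Brown2009}, and sign does not affect point counts (since $[fg]_p=[f]_p+[g]_p-[f,g]_p$ depends only on the zero loci), so (3) follows.

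The conceptual content is therefore modest: one counting lemma plus the standard Dodgson identities. The main obstacle is the bookkeeping. At each of the several eliminations one must carry the point counts together with their $p$-adic valuations, check that the inequality $|E(G)|\geq 2h_1(G)$ propagates so that every ``error'' term is divisible by a high enough power of $p$, and verify that the relevant leading coefficients define loci in the expected number of variables so that the lemma applies with the correct $N$. Keeping the modulus pinned at $p^{3}$ throughout is where the care is needed; the end result is precisely Lemma~24 and Corollary~28 of \cite{BrownSchnetz2012}, which one may cite directly, as we do, or reconstruct along the lines above.
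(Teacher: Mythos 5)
Your proposal is correct and ultimately takes the same route as the paper, which offers no independent proof of this proposition but simply quotes it as Lemma~24 and Corollary~28 of \cite{BrownSchnetz2012}; your closing remark that one may ``cite directly, as we do'' matches the paper exactly. The preceding sketch (linearity in each edge variable, the Fubini-type counting lemma, and the Dodgson identities, with the hypothesis $2+|E(G)|\leq 2|V(G)|$ securing the needed $p$-adic divisibilities) is a fair outline of how Brown and Schnetz establish the cited results, so nothing further is required.
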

Note that the graphs of interest in $\phi^4$ theory are decompleted 4-regular graphs. The reader can easily verify that using any expression above and converting it into spanning forest polynomials will yield a graph that satisfies the criterion. \\
We give one last result --- a lemma from the proof of the Chevalley-Warning theorem --- after which a general method for computing the $c_2$ invariant is outlined.
\begin{lemma}\label{lem c2 coeff}
	Let $F$ be a polynomial of degree $N$ in $N$ variables with integer coefficients. Then the coefficient of $x_1^{p-1}x_2^{p-1}\cdots x_N^{p-1}$ in $F^{p-1}$ is $[F]_p$ modulo p. 
\end{lemma}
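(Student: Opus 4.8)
The plan is to use the double-counting argument underlying the proof of the Chevalley--Warning theorem: evaluate the sum $S=\sum_{x\in\mathds{F}_p^N}F(x)^{p-1}$ in two ways and compare the two answers modulo $p$.

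On the one hand, Fermat's little theorem gives $F(x)^{p-1}=1$ whenever $F(x)\neq 0$ and $F(x)^{p-1}=0$ whenever $F(x)=0$, so $S$ counts the points of $\mathds{F}_p^N$ \emph{not} lying on the variety of $F$; that is, $S=p^N-[F]_p\equiv -[F]_p \pmod p$. On the other hand, writing $F^{p-1}=\sum_{\mathbf a}c_{\mathbf a}\,x^{\mathbf a}$ as a sum of monomials and interchanging the order of summation gives $S=\sum_{\mathbf a}c_{\mathbf a}\prod_{i=1}^N\big(\sum_{t\in\mathds{F}_p}t^{a_i}\big)$. I would then invoke the elementary identity $\sum_{t\in\mathds{F}_p}t^{a}\equiv -1\pmod p$ when $a$ is a positive multiple of $p-1$, and $\equiv 0 \pmod p$ otherwise (the case $a=0$ included). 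This annihilates every term except those for which every exponent $a_i$ is a positive multiple of $p-1$, in particular $a_i\geq p-1$ for all $i$.

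The one step that genuinely requires an argument --- and the place where the hypothesis that the number of variables equals the degree is used --- is isolating the monomial $x_1^{p-1}\cdots x_N^{p-1}$: since $\deg F\leq N$ we have $\deg F^{p-1}\leq N(p-1)$, so $\sum_i a_i\leq N(p-1)$ for every monomial $x^{\mathbf a}$ occurring in $F^{p-1}$; combined with $a_i\geq p-1$ for all $i$ this forces $\mathbf a=(p-1,\dots,p-1)$. Hence only that one monomial survives, and $S\equiv(-1)^N c\pmod p$, where $c$ is its coefficient in $F^{p-1}$. Comparing with the first evaluation gives $c\equiv(-1)^{N+1}[F]_p\pmod p$, which is the asserted identity up to a global sign; since every subsequent application in this paper is carried out modulo $2$, this sign is irrelevant. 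Beyond the degree count just described there is no real obstacle --- this is the classical lemma from Chevalley--Warning --- so the write-up is short.
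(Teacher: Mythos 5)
Your proof is correct and is exactly the classical Chevalley--Warning double-counting argument that the paper itself invokes by citing Ax, so there is no methodological difference to report. Your sign bookkeeping is also right: the coefficient is $(-1)^{N+1}[F]_p \bmod p$, which matches the statement only up to sign in general, but this is harmless here since the paper's computations are carried out modulo $2$ (and in its applications $N$ is odd in any case).
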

The proof can be found in section 2 of \cite{Ax1964}. This lemma is of key importance in finding the size of the affine variety modulo $p$ for the $c_2$ invariant above. Given a graph $G$ and one of the expressions in proposition~\ref{prop calculate c2}, working modulo 2 we need only to assign each edge of the graph once between the two polynomials. To simplify this, we convert the polynomials above to spanning forest polynomials, by proposition~\ref{prop sp for}. By itself, this is not so useful since if $p$ and $G$ are both fixed there are many finite ways to compute $c_2^{(p)}(G)$ including simple brute force counting and denominator reduction \cite{BrownSchnetz2012}.  However, using Proposition~\ref{prop calculate c2} and Lemma~\ref{lem c2 coeff} is particularly useful because unlike other techniques they first let us interpret the calculations combinatorially as edge assignments, and more importantly, they allow us to work recursively and obtain finite formulas for entire families of graphs. The explicit calculations which follow will clarify matters. 

\section{Non-skew toroidal grids}
In this section we will show that all decompleted non-skew toroidal grids have $c_2^{(2)} = 0$.  The proof will be done by fixing $m\geq 3$ and considering the family of toroidal grids indexed by $(k, 0)$ and $(0, m)$ for all $k\geq 3$.  To illustrate the argument we will first prove the $(k, 0)$, $(0, 3)$ case separately and then proceed to the general case.

It is interesting that all decompleted non-skew toroidal grids have $c_2^{(2)}=0$ because when the $c_2$ invariant is $0$ is important.  If $c_2^{(p)} =0$ for all $p$ then the graph's Feynman period should have less than the maximal transcendental weight for the size of the graph, see \cite{Schnetz2011}.  This is known as weight drop, see \cite{BrownYeats2011}. The interpretation of $c_2^{(p)}=0$ only for $p=2$ is less clear.   We know some reasons why $c_2^{(p)}$ may be 0 for a graph, see \cite{ BrownSchnetz2012, BrownSchnetzYeats2014}, but none of these apply to non-skew toroidal grids.  New weight drop graphs are likely to be quite sparse, so one should remain pessimistic about how many of the non-skew toroidal grids will turn out to have weight drop.  However, the non-skew toroidal grids still provide a very interesting family with $c_2^{(2)}=0$, whether or not it is for reasons other than weight drop and possibly even giving new families of weight drop graphs.  Calculating the $c_2^{(3)}$ for some of these graphs would be particularly interesting in order to try to distinguish the different possibilities.

\begin{proposition}\label{prop 3tor}
	Let $G$ be a decompleted toroidal grid constructed from 3-cycles, with $|V(G)|\geq8$, and with the edges and vertices of $G$ labelled as in figure \ref{fig G 3}. Then $c_2^{(2)}(G)=0$.
\end{proposition}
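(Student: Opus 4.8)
The plan is to apply part (2) of Proposition~\ref{prop calculate c2} (part (1) would work equally well), convert the Dodgson polynomials into spanning forest polynomials via Proposition~\ref{prop sp for}, and then invoke Lemma~\ref{lem c2 coeff} with $p=2$ to turn the problem into a mod-$2$ count. Take the distinguished edges $i,j,k,l$ as in Figure~\ref{fig G 3}, chosen to lie within a bounded region of the grid containing the decompleted triangle. One checks the hypothesis of Lemma~\ref{lem c2 coeff} (degree equals number of variables): here $|E(G)|=6k-4$, $|V(G)|=3k-1$, so $2+|E(G)|=2|V(G)|$ and the equality case of Proposition~\ref{prop calculate c2} applies, and the product $\Psi^{ij,kl}\Psi^{ik,jl}$ has degree $6k-8$ in the $6k-8$ remaining edge variables. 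Consequently $c_2^{(2)}(G)$ equals, modulo $2$, the coefficient of $\prod_{e}\alpha_e$ (the product over all edges of $G$ other than $i,j,k,l$) in $\Psi^{ij,kl}\Psi^{ik,jl}$. Expanding both factors into spanning forest polynomials and matching monomials, this coefficient counts, mod $2$, the ways to partition $E(G)\setminus\{i,j,k,l\}$ as a disjoint union $F_1\sqcup F_2$ where $F_1$ is a spanning forest whose component partition is admissible (in the sense of Proposition~\ref{prop sp for}) for $\Psi^{ij,kl}$ and $F_2$ is admissible for $\Psi^{ik,jl}$; note $F_2$ is determined by $F_1$, so this is a count of subsets $F_1$ with both $F_1$ and its complement ``good'' forests.

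Next I would use the recursive structure. Write $G=G_k$ for the decompletion of $\gamma_k\times\gamma_3$; it is a cyclic chain of $k$ triangles, consecutive triangles joined by a $3$-edge matching, with one triangle ``opened up'' by the decompletion. Fix the distinguished edges and the opened triangle inside a bounded base block $B$, and regard the remaining $k-c$ triangles (for a fixed constant $c$) as identical repeated blocks, each contributing one triangle and one connecting matching. Cutting between consecutive repeated blocks severs three matching edges; the relevant state on such a cut records, for the partial forests $F_1$ and $F_2$ assembled so far, (i) for each of the three cut edges whether it lies in $F_1$ or in $F_2$, and (ii) the partition of the three cut vertices into connected components within $F_1$ and within $F_2$, together with bookkeeping for which components are already ``closed off'' versus still required to attach to prescribed parts of the admissible partitions. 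This is a finite state set, so the mod-$2$ count takes the form $u^{\top} M^{\,k-c} w$ for a fixed transfer matrix $M$ over $\mathds{F}_2$ and boundary vectors $u,w$ read off from $B$; since $m=3$ the block and hence $M$ are small enough to exhibit explicitly.

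Finally, since $M$ is a fixed matrix over $\mathds{F}_2$ the sequence $n\mapsto u^{\top}M^{\,n}w$ is eventually periodic, so it suffices to evaluate it on the finitely many pre-periodic values together with one full period and check it is $0$ for every $k$ with $|V(G)|\geq 8$, i.e. every $k\geq 3$. I expect the real work, and the main obstacle, to be in the middle step rather than in the concluding linear-algebra check: correctly enumerating the boundary states, in particular getting the admissibility/closure conditions of Proposition~\ref{prop sp for} exactly right and neither over- nor under-counting the forests whose complement must also be a forest, and then verifying the entries of $M$ and the boundary vectors. A tempting shortcut would be to replace the whole apparatus by an explicit fixed-point-free involution on the set of valid bipartitions $F_1\sqcup F_2$ — for instance one induced by a reflection symmetry of $\gamma_k\times\gamma_3$ that fixes the decompleted column — but it seems unlikely such an involution is genuinely fixed-point-free, so the transfer-matrix argument is the safer route and moreover is the one that generalizes to arbitrary $m$ and to the later recursive families.
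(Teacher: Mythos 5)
Your setup (Proposition~\ref{prop calculate c2}(2), conversion to spanning forest polynomials, Lemma~\ref{lem c2 coeff} with $p=2$, and the degree/variable count $6k-8$) is correct, and the transfer-matrix strategy you outline is legitimate in principle --- it is essentially the general machinery of Theorem~\ref{thm finite} specialized to this family. But as written there is a genuine gap: the entire computational content is deferred and never carried out. You do not enumerate the boundary states, exhibit the matrix $M$ or the vectors $u,w$, determine the pre-period and period of $n\mapsto u^{\top}M^{n}w$ over $\mathds{F}_2$, or check that the finitely many required values are all $0$. You yourself flag this ("I expect the real work\dots to be in the middle step"), and without that work the conclusion $c_2^{(2)}(G)=0$ is a conjecture supported by a plan, not a proved statement. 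Eventual periodicity alone buys nothing until the explicit matrix is in hand and the finite check is done, and getting the closure/admissibility bookkeeping right at the cuts is exactly the delicate part you have skipped.

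It is also worth noting that the shortcut you dismiss is, in essence, what the paper actually does. The paper starts from the 5-invariant, reduces to a product $\Psi_H\cdot(\text{spanning forest polynomials})$ on a chain graph $H$, and then uses the pairwise flip symmetry of $H$: any edge assignment not invariant under the flip pairs off with its image and cancels mod $2$, one of the two spanning-forest terms is killed outright, and the two surviving cases for the other term each reduce to the \emph{same} product $\Psi_{H_{n-3}}\Phi_{H_{n-3}}^{\{a\},\{b\}}$ and hence cancel against each other mod $2$. This is not a fixed-point-free involution on all bipartitions, but a cancellation of non-symmetric assignments followed by a short case analysis on the symmetric ones --- no transfer matrix and no base-case sweep are needed. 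If you want to salvage your route, you must actually build and verify $M$, $u$, $w$ and perform the periodicity check; otherwise adopt the symmetry-cancellation argument, which is both shorter and the one the paper generalizes to arbitrary $N$ in the following proposition.
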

\begin{figure}[h]
	\includegraphics[scale=0.75]{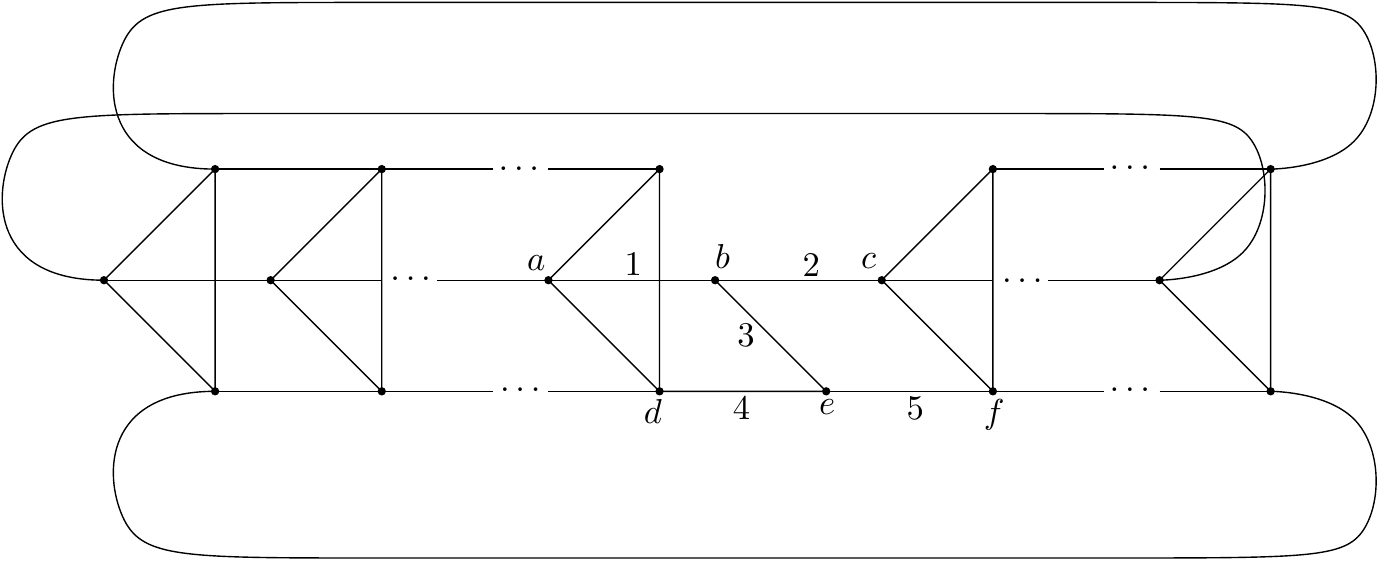}
	\caption{$G$ for 3-toroidal grid.}\label{fig G 3}
\end{figure}
When $|V(G)|=8$, this corresponds to $P_{7,10}$ in \cite{Schnetz2010}. \\ 
\begin{proof}
	Using the 5-invariant, we calculate 
	\[\Psi_3^{12,45}=\pm \Phi^{\{a,d\},\{c,f\},\{b\},\{e\}} \pm \Phi^{\{a,f\},\{c,d\},\{b\},\{e\}}\]
	\[=\includegraphics[scale=0.5]{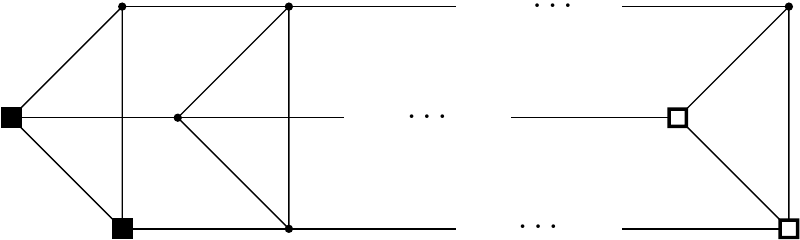} \quad \pm \includegraphics[scale=0.5]{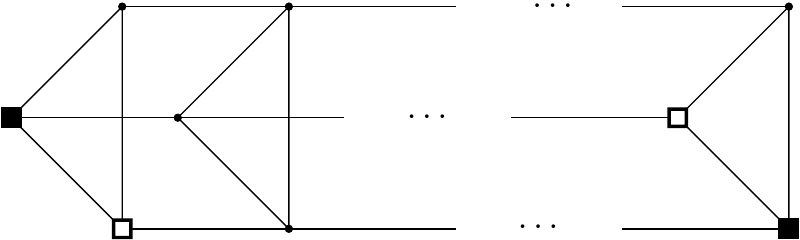}\]
	and
	\[\Psi^{134,235}=   \pm \Phi^{\{a,c,d,f\},\{b\},\{e\}}=\Psi_H\]
	
	\noindent Where $H$ is the graph in figure \ref{fig H 3}, below. We disregard the other term since $\Psi^{123,345}=0$.  \\
	\begin{figure}[h]
		\includegraphics[scale=0.75]{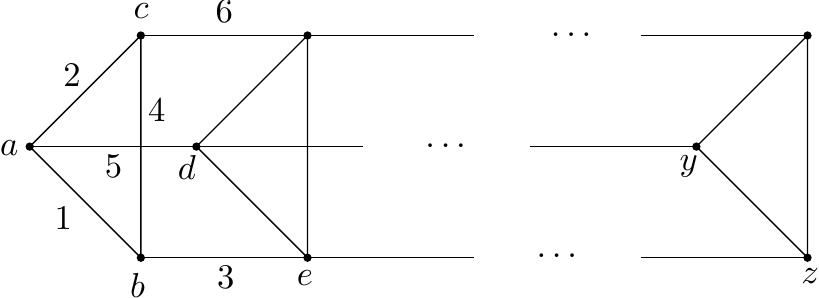} 
		\caption{$H$}\label{fig H 3}
	\end{figure}
	We compute a recurrence below to obtain the coefficient of $\alpha_1\alpha_2\cdots \alpha_{|E(H)|}$ in ${}^5\Psi$. Where necessary, we write $H_n$ to indicate the $n$-dependence of $H$, where $n$ is the number of vertices. Let $a_n=[\Psi_{H_n}(\pm \Phi_{H_n}^{\{a,b\},\{y,z\}} \pm \Phi_{H_n}^{\{a,z\},\{b,y\}})]_2$. By lemma~\ref{lem c2 coeff}, we need to assign each edge to either $\Psi$ or $\Phi$ in  $\Psi_H\Phi_H^{\{a,b\},\{y,z\}}$ and likewise in $\Psi_H\Phi_H^{\{a,z\},\{b,y\}}$ (note that these are the same as the polynomials calculated originally, only relabelled to match $H$). We say an edge is assigned if it appears in the corresponding spanning tree or spanning forest structure. If any assignment of edges is not invariant under the symmetry of $H$ where we swap vertices $a,b;d,e;...;y,z$ pairwise, then the flipped assignment is valid as well and so these cancel modulo 2.
        
	Also, in every case, vertex $a$ cannot be disconnected in $\Psi_H$. Thus one of 1, 2, or 5 must be in $\Psi_H$. In fact, we claim $\Psi_H\Phi_H^{\{a,z\},\{b,y\}}$ yields no contribution modulo 2. If $1\in \Psi_H$, then because of the symmetry, to get a nonzero contribution we must have edges 2,4 in $\Phi_H^{\{a,z\},\{b,y\}}$ --- but this would connect vertices $a$ and $b$. Similarly, if $2\in \Psi_H$, 4 must be as well. Thus 1 must be in $\Phi_H^{\{a,z\},\{b,y\}}$, but this again connects $a$ and $b$. Finally, if $5\in \Psi_H$ and neither of edges 1 or 2, we get a cycle in $\Phi_H^{\{a,z\},\{b,y\}}$. Thus the term itself does not contribute.
        
	We now turn our attention to $\Psi_H\Phi_H^{\{a,b\},\{y,z\}}$. We claim similarly that $\Psi_H\Phi_H^{\{a,b\},\{y,z\}}$ yields no contribution modulo 2.

\medskip
        
	\textbf{Case 1.} $1\in \Psi_H$. \\
	Then $2,4 \in \Phi_H^{\{a,b\},\{y,z\}}$ so that there is no cycle in $\Psi_H$. Furthermore, to avoid disconnecting $a,b$ in $\Psi_H$, we must have $3,5\in \Psi_H$. Similarly, we must have $6\in \Psi_H$ or $c$ will be disconnected in $\Psi_H$.
        
	Therefore, with only $2,4\in \Phi_H^{\{a,b\},\{y,z\}}$, the rest of $H$ must be spanned, so this becomes $\pm \Psi_{H_{n-3}}$. With $1,3,5,6 \in \Psi_H$, to avoid cycles we must not connect $d$ and $e$ in the spanning tree structure, but we must connect one of $a$ or $b$ to $c$. This is $\Phi_{H_{n-3}}^{\{a\},\{b\}}$.  As a whole this case has the same contribution as $\Psi_{H_{n-3}}\Phi^{\{a\}, \{b\}}_{H_{n-3}}$.

\medskip
        
	\textbf{Case 2.} $2,4\in \Psi_H$. \\
	Then $1\in \Phi_H^{\{a,b\},\{y,z\}}$ so that there is no cycle in $\Psi_H$. Now if $3,5\in \Phi_H^{\{a,b\},\{y,z\}}$ as well as edge 6, $a,b,c$ will be disconnected from the rest of the graph in $\Psi_H$. If instead 6 is in $\Psi_H$, then $c$ is disconnected in $\Phi_H^{\{a,b\},\{y,z\}}$, which cannot happen. Thus $3,5\in \Psi_H$. If edge 6 is in $\Psi_H$ as well, then once again $c$ is disconnected. Thus the only permissible assignment of edges is with $1,6\in \Phi_H^{\{a,b\},\{y,z\}}$ --- this becomes $\Psi_{H_{n-3}}$ --- and $2,3,4,5 \in \Psi_H$. In $\Psi_H$, we cannot connect vertices $d$ and $e$. On $H_{n-3}$ labelled as in figure \ref{fig H 3}, this becomes $\Phi_{H_{n-3}}^{\{a\},\{b\}}$, which cancels with case 1. \\ \\
	Therefore, $c_2^{(2)}(G)=0$ for $|V(G)|\geq8$.
\end{proof}

Next, we generalize the arguments of the previous section in order to show that if $G$ is any non-skew toroidal grid, then its $c_2$ invariant at 2 is zero.
\begin{proposition}
	Let $G$ be a decompleted toroidal grid constructed from $N$-cycles, with $|V(G)| \geq  3N-1$. Then $c_2^{(2)}(G)=0$.
\end{proposition}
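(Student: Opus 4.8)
The plan is to run the argument of Proposition~\ref{prop 3tor} with an $N$-cycle ``column'' in place of a $3$-cycle. Write the family as $G = G_n$ with $n = |V(G)| = kN - 1$, $k \geq 3$ (this is the hypothesis $|V(G)| \geq 3N-1$), and picture the toroidal grid as $k$ consecutive copies of $\gamma_N$ joined in a cycle by ``rung'' edges, with one vertex deleted. First I would choose the five edges $i,j,k,l,m$ of the $5$-invariant inside a single column near the decompletion and compute ${}^5\Psi(i,j,k,l,m)$ via Proposition~\ref{prop calculate c2}(3); as in the $N=3$ case one of the two products vanishes because the relevant Dodgson polynomial has $I \cup J$ equal to an edge cut. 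Proposition~\ref{prop sp for} then rewrites the surviving product, modulo $2$, as $\Psi_H\bigl(\Phi_H^{P_1} + \Phi_H^{P_2}\bigr)$, where $H = H_n$ is $G$ with those five edges removed and the forced-isolated endpoints deleted, $\{a,b,y,z\}$ are four distinguished vertices, and $P_1 = \{\{a,b\},\{y,z\}\}$, $P_2 = \{\{a,z\},\{b,y\}\}$. The key structural point is that $H_n$ is recursive with step $N$: it is built from $H_{n-N}$ by attaching one $N$-cycle column along $N$ new rung edges.

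Next I would apply Lemma~\ref{lem c2 coeff} with $p = 2$, so that $c_2^{(2)}(G) \equiv [\Psi_H(\Phi_H^{P_1} + \Phi_H^{P_2})]_2 \pmod 2$ equals, modulo $2$, the number of partitions $E(H) = T \sqcup (E(H)\setminus T)$ with $T$ a spanning tree of $H$ and $E(H)\setminus T$ a spanning forest realizing $P_1$ or $P_2$. I would first dispose of $P_2$: the $N$-cycle analogue of the ``$\Psi_H\Phi_H^{\{a,z\},\{b,y\}}$ yields no contribution'' step --- a short connectivity argument around $\{a,b,y,z\}$ --- shows every candidate partition for $P_2$ forces a cycle in $T$ or disconnects a forced part, so $[\Psi_H \Phi_H^{P_2}]_2 = 0$. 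For the $P_1$ term I would use a reflection symmetry of $G$ fixing the deleted vertex and the chosen edges setwise to discard non-symmetric partitions (they cancel in pairs), and then analyse how the $2N$ edges of the newest column $C$ (its $N$ cycle edges together with its $N$ rungs) are distributed between $T$ and the forest. Spanning/forest constraints at the $N$ vertices of $C$ force most of this distribution and reduce the remaining count on $H_{n-N}$ to a term $[\Psi_{H_{n-N}} \Phi_{H_{n-N}}^{Q}]_2$ for some pairing $Q$ of boundary vertices of $H_{n-N}$; the point is then that the admissible distributions of $C$ yield each such $Q$ an even number of times, exactly as the two surviving cases of Proposition~\ref{prop 3tor} both reduced to $\Phi_{H_{n-3}}^{\{a\},\{b\}}$ and cancelled. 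So $c_2^{(2)}(G) \equiv 0$, with no induction on $k$ required; the bound $|V(G)| \geq 3N-1$ is used only to keep the deleted vertex, the five chosen edges, and the column $C$ far enough apart for these forcing steps to be valid, and $N = 3$ is Proposition~\ref{prop 3tor}, which serves as the template.

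I expect the seam analysis of the $2N$ edges of $C$ to be the main obstacle. For $N = 3$ there are only six seam edges and, after the connectivity reductions, just two surviving distributions that visibly cancel; for general $N$ one must show that the admissible ways of splitting the $N$ cycle edges and $N$ rungs of $C$ between tree and forest pair off while inducing the same boundary pairing $Q$ on $H_{n-N}$. Exhibiting such a fixed-point-free, $Q$-preserving involution --- equivalently, organising the case analysis so that it closes uniformly in $N$ --- together with checking the connectivity bookkeeping around the $N$-cycle in each case, is where the real work lies.
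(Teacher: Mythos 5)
Your proposal is a plan rather than a proof: the step on which everything hinges --- that the admissible distributions of the $2N$ seam edges of the newest column pair off, yielding each boundary pairing $Q$ an even number of times --- is exactly the part you defer (``where the real work lies''), and it is not at all clear that it closes uniformly in $N$. In the $N=3$ case the forcing arguments are tight because each column vertex has so few incident edges that only two global configurations survive; for a general $N$-cycle column the number of ways to split its cycle edges and rungs between tree and forest grows with $N$, the connectivity constraints at the column vertices no longer pin the assignment down to a bounded list of cases, and you have given no candidate for the fixed-point-free, $Q$-preserving involution. There is also a smaller unverified claim: that the analogue of $\Psi^{123,345}=0$ (one product in the $5$-invariant vanishing) persists for your choice of five edges in an $N$-cycle column. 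So as written the argument has a genuine gap at its core.

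For contrast, the paper avoids the column-by-column recursion entirely. It starts from Proposition~\ref{prop calculate c2}(2) with the four edges $1,2,4,5$ at the decompletion, forces edges $3,6$ into one factor, and writes $\Psi_{36}^{12,45}$ as a sum of six spanning forest polynomials on $H$ with parts pairing $\{a,b,c\}$ against $\{d,e,f\}$, the other factor being $\Psi_H$. Then it exploits \emph{two} symmetries of $H$ --- the vertical flip ($a\leftrightarrow d$, etc.) and the horizontal flip ($a\leftrightarrow b$, etc., fixing $c,f$) --- to discard all but two partitions, and finally makes a purely local analysis of the six edges around the spine vertices $c$ and $f$: in every surviving configuration exactly one of $g,h$ must connect to exactly one of $i,j$, and no such choice is invariant under both flips, so every assignment cancels mod $2$. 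This is uniform in $N$, requires no reduction to $H_{n-N}$, no induction, and no involution on seam configurations; if you want to salvage your approach you would need to supply the pairing argument explicitly, but the double-symmetry route is what actually makes the general case tractable.
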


\begin{proof}
	With $G$ labelled as in figure \ref{fig non skew}, we begin with $\Psi^{12,45}$ and $\Psi^{15,24}$ and proceed to assign edges according to lemma~\ref{lem c2 coeff}. To avoid disconnecting $a$ and $b$, we must assign edges $3,6$ to $\Psi^{12,45}$ and thus the two factors we have are $\Psi_{36}^{12,45}$ and $\Psi^{1536,2436}$. \\
	\begin{figure}[h]
		\includegraphics[scale=0.7]{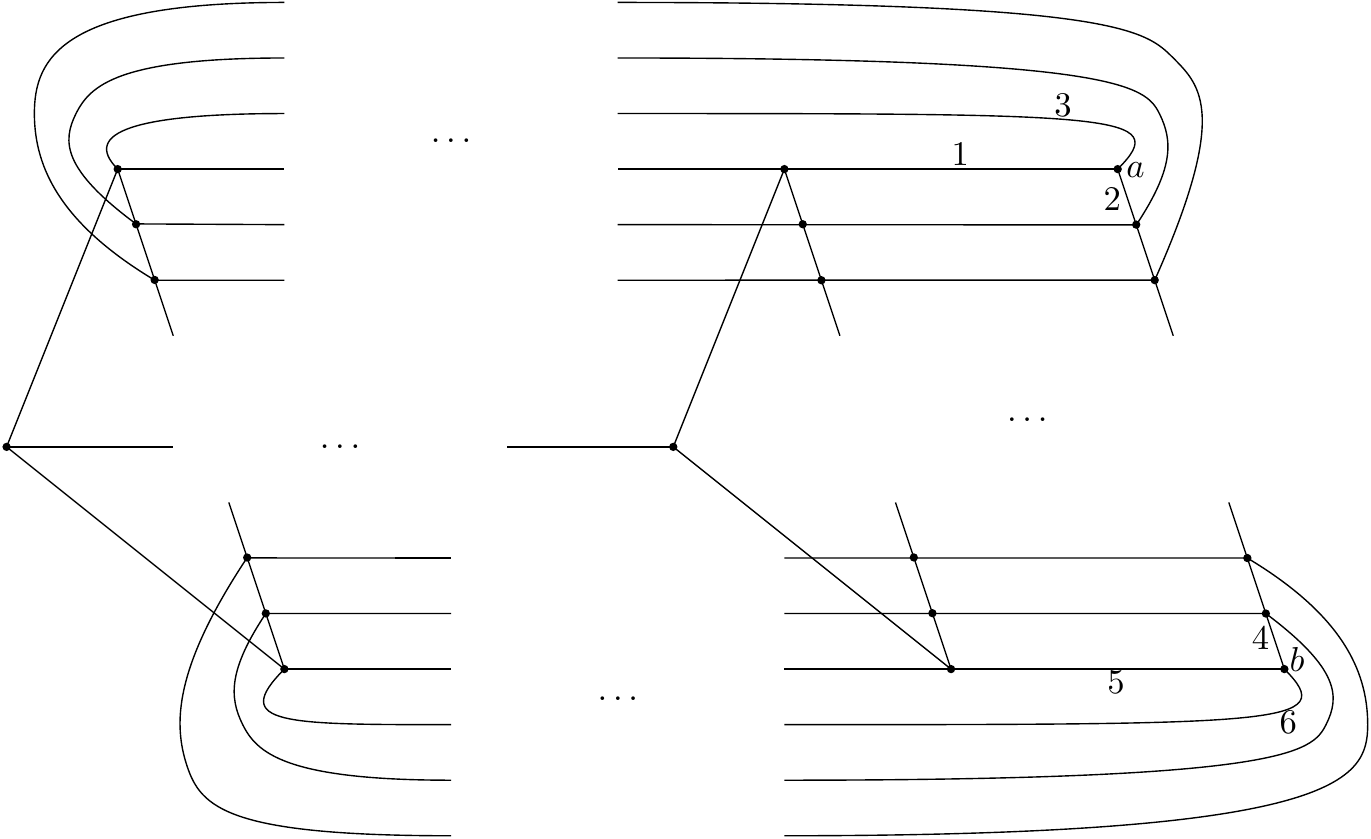}
		\caption{Decompleted non-skew toroidal grid.}\label{fig non skew}
	\end{figure}
	With $H$ as in figure \ref{fig gen H}, $\Psi^{1536,2436}$ is simply $\Psi_H$, and we have the following result for $\Psi_{36}^{12,45}$, with respect to the labelling of $H$:
	\begin{align*}
          \Psi_{36}^{12,45}=& \pm\Phi^{\{a,d\},\{b,e\},\{c,f\}}\pm\Phi^{\{a,d\},\{b,f\},\{c,e\}}\pm\Phi^{\{a,f\},\{b,e\},\{c,d\}} \\
	  & \pm\Phi^{\{a,e\},\{b,d\},\{c,f\}}\pm\Phi^{\{a,f\},\{b,d\},\{c,e\}}\pm\Phi^{\{a,e\},\{b,f\},\{c,d\}}
        \end{align*}
	\begin{figure}[h]
		\includegraphics[scale=0.7]{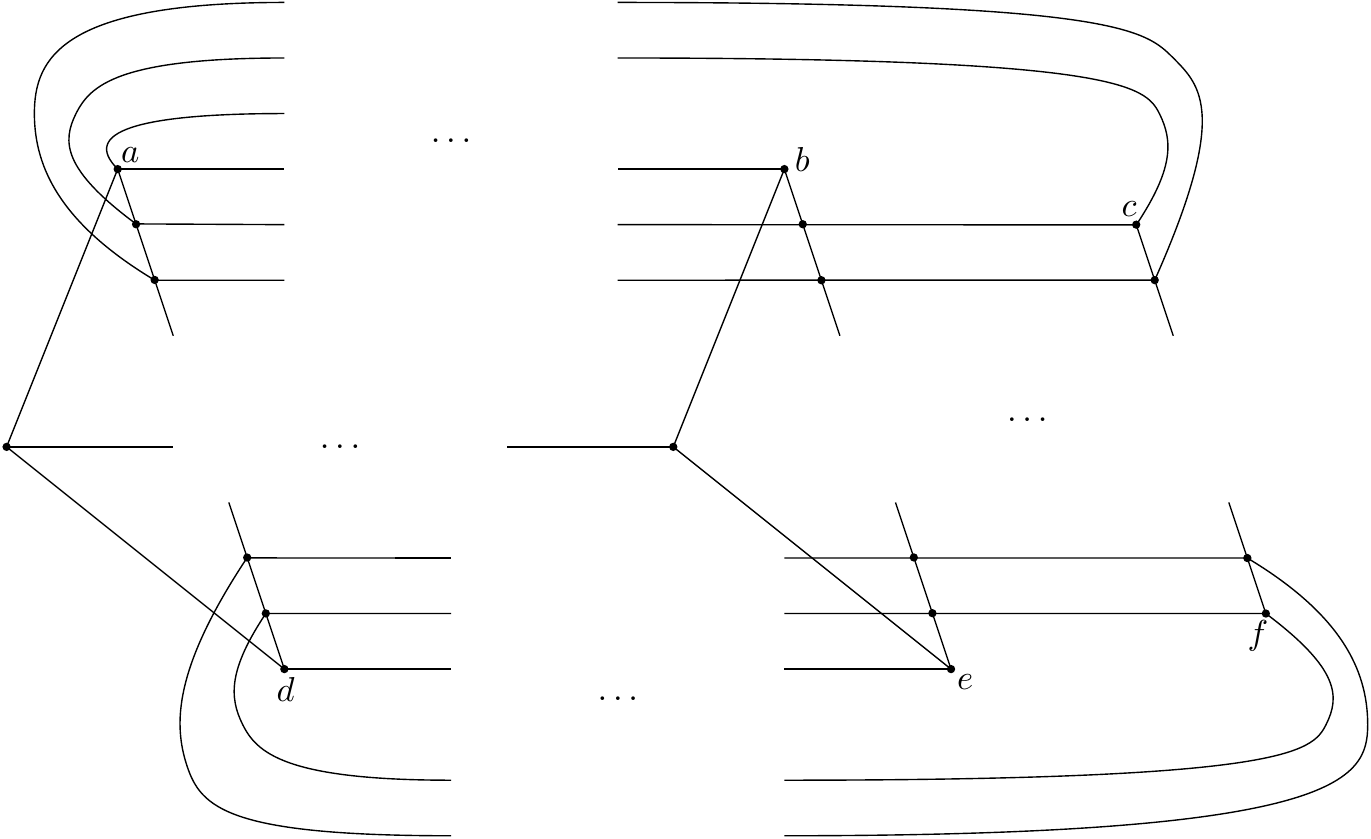}
		\caption{$H$}\label{fig gen H}
	\end{figure}
	Notice that $H$ possesses vertical symmetry (the flip where $a$ goes to $d$, $b$ to $e$, etc.) --- thus the spanning forest polynomials above must be invariant under this vertical flip in order to make a nonzero contribution modulo 2 (as in the proof of the previous proposition). From here, we see $\Phi^{\{a,f\},\{b,d\},\{c,e\}}$ and $\Phi^{\{a,e\},\{b,f\},\{c,d\}}$ do not contribute modulo 2. \\
	Notice further that $H$ possesses horizontal symmetry (the flip where $a$ goes to $b$, $d$ to $e$, etc. while $c$ and $f$ remain fixed). Again, terms must be invariant under this horizontal flip or they will not contribute modulo 2. Thus we need only consider
	\[
        \pm\Phi^{\{a,d\},\{b,e\},\{c,f\}}\pm\Phi^{\{a,e\},\{b,d\},\{c,f\}}\]
        in calculating edge assignments for $\Psi_{36}^{12,45}$.
        
	We consider the path from $c$ to $f$. In particular, we consider the possible assignments for edges $7,8,\dots,12$, as shown in figure \ref{fig spine}.
        
	\begin{figure}[h]
		\includegraphics[scale=0.8]{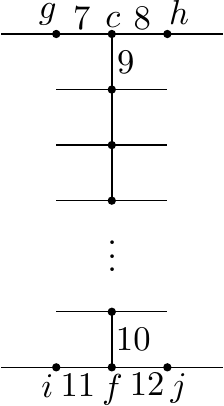} 
		\caption{Labelling around $c$ and $f$}\label{fig spine} 
	\end{figure}
        
	Once again, the edge assignments must be invariant under the horizontal and vertical flip. Furthermore, since $c$ and $f$ cannot be entirely disconnected from $H$ in either factor of $\Psi_H(\Phi^{\{a,d\},\{b,e\},\{c,f\}}\pm\Phi^{\{a,e\},\{b,d\},\{c,f\}})$, the only possibilities are assigning edges $7,8,11,12$ to one factor, and $9,10$ to the other. We proceed via cases.

\medskip
	
	\noindent\textbf{Case 1}. Suppose edges $9,10$ are assigned to $\Phi^{\{a,d\},\{b,e\},\{c,f\}}$ or to $ \Phi^{\{a,e\},\{b,d\},\{c,f\}}$. Consider now the $\Psi_H$ assignment.  Vertices $g$ and $h$ connect in $\Psi_H$ via edges $7$ and $8$ and so $g$ and $h$ cannot be connected in $H\backslash\{7,8,9,10,11,12\}$.  Likewise for $i,j$. To avoid disconnecting the graph, we must connect exactly one of $g$ or $h$ with exactly one of $i$ or $j$. However, none of these possibilities are invariant under both a horizontal and vertical flip, and therefore this case does not contribute. 

\medskip
	
	\noindent\textbf{Case 2}. Suppose edges $7,8,11,12$ are assigned to $\Phi^{\{a,d\},\{b,e\},\{c,f\}}$ or to $ \Phi^{\{a,e\},\{b,d\},\{c,f\}}$.  Now $g$ and $h$ are connected in this factor by edges $7$ and $8$ so, similarly to the previous case, when then considering this half of the edge assignment on $H\backslash\{7,8,9,10,11,12\}$, again we cannot connect $g$ to $h$ or $i$ to $j$, but exactly one of $g$ or $h$ must connect to exactly one of $i$ or $j$, since $c$ and $f$ must be in a tree. However, like the case above, none of these configurations are invariant under both a horizontal and vertical flip, and so this case does not contribute. 

\medskip
	
	Therefore, $c_2^{(2)}(G)=0$ for $|V(G)| \geq 3N-1$. 
\end{proof}

\section{X-ladders}\label{sec xladder}
We call the graphs in figure \ref{fig X ladder} \textit{X-ladders}, either symmetric or capped based on their ends. The $c_2$ invariant of the decompleted capped X-ladders is well known, shown to be zero for all $p$ using double-triangle reduction, see section 5.6 of \cite{PanzerSchnetz2016}.  These graphs are particularly interesting because not only are their $c_2$ invariants $0$, indicating a drop in transcendental weight, but in fact those which have been calculated have maximal multiple weight drop.
Using our methods, we get an easy alternate way to see $c_2^{(2)}(G)=0$ for the capped X-ladders which applies with minor modifications to the symmetric X-ladders. By definition we cannot see higher weight drop in the $c_2$ invariant, but it is suggestive that our arguments essentially only used the symmetries from one X of the ladder leaving the symmetries of the remaining Xs to potentially be somehow showing the further weight drops.\\
\begin{figure}[h]
\includegraphics[scale=0.5]{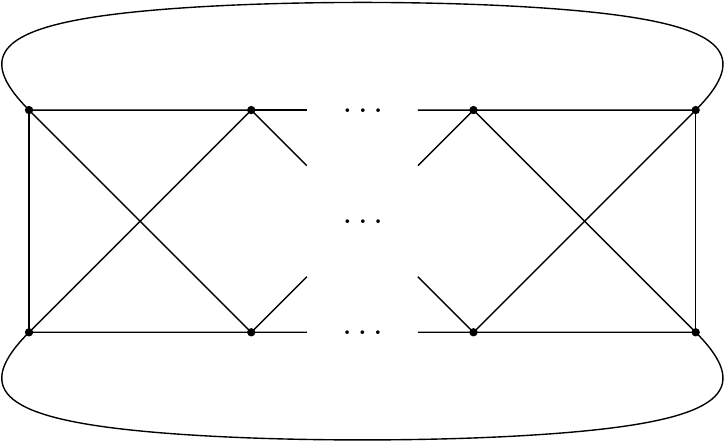} \qquad
\includegraphics[scale=0.5]{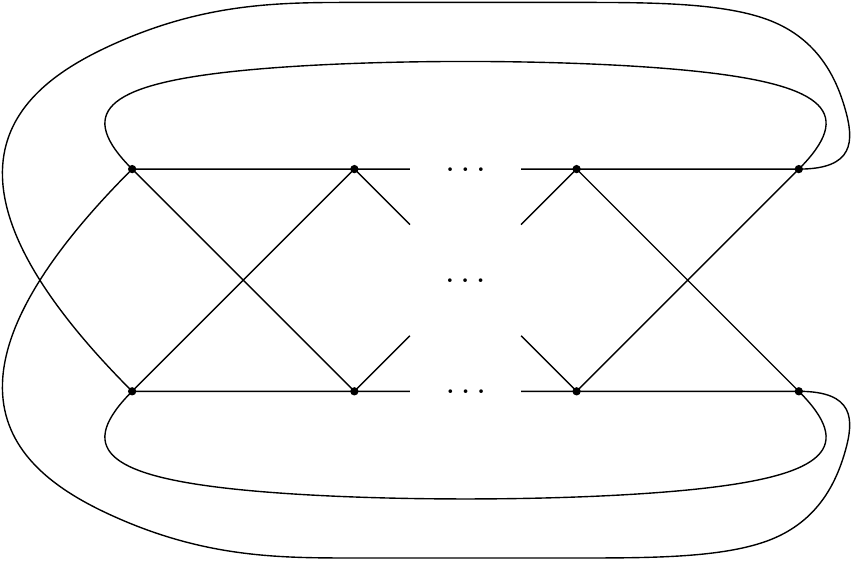}
\caption{Capped (left) and symmetric (right) X-ladders.}\label{fig X ladder}
\end{figure}
\begin{proposition}
	Let $G$ be a decompleted capped X-ladder, with $|V(G)|\geq 7$, labelled as in figure \ref{fig anti X}. Then $c_2^{(2)}(G)=0$. \\
\end{proposition}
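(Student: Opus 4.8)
The plan is to mimic the structure of the two preceding propositions on non-skew toroidal grids, since a capped X-ladder also has a recursive ``spine'' built from repeated identical gadgets (the X's) and the same kind of local mirror symmetries that made whole families of edge assignments cancel modulo $2$. First I would choose five edges $i,j,k,l,m$ at one end of the ladder --- the capped end --- and write down one of the formulas from Proposition~\ref{prop calculate c2}, say $c_2^{(2)}(G) = [\Psi^{ij,kl}\Psi^{ik,jl}]_2$ or the $5$-invariant version, picking the edges so that the Dodgson polynomials expand (via Proposition~\ref{prop sp for}) into a small controllable sum of spanning-forest polynomials on the graph $G$ with those edges deleted, exactly as was done for the $3$-toroidal grid where $\Psi^{134,235}$ collapsed to a single $\Psi_H$. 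The goal is to arrange that one of the two factors becomes $\Psi_H$ for an explicit graph $H$ (the X-ladder with a few end edges removed) and the other becomes a short sum $\pm\Phi^{P_1}_H \pm \cdots$ of spanning-forest polynomials whose parts are pinned at the vertices of the cap.

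Next I would invoke Lemma~\ref{lem c2 coeff}: modulo $2$ the quantity $[\,\cdot\,]_2$ is the coefficient of $\prod_e \alpha_e$ in the product, which means every edge of $H$ must be assigned exactly once, either to the spanning tree of $\Psi_H$ or to the spanning forest of the $\Phi$-factor. Then comes the combinatorial heart, which is a case analysis on how the edges near the cap get split between the two factors, organized by the horizontal and vertical mirror symmetries of the X at the capped end (the flip exchanging the two rails, and/or the flip within the X). The claim to establish is that every admissible edge assignment either (a) is not fixed by one of these symmetries, in which case it pairs with its mirror image and the two cancel modulo $2$, or (b) is forced by connectivity constraints (no cycles allowed in the tree part, no disconnections, the pinned vertices must sit in the right trees) to be impossible. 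This is precisely the pattern of Cases~1 and~2 in the general non-skew toroidal grid proof, where connectivity around two distinguished vertices $c,f$ left only configurations that failed to be symmetric.

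The main obstacle I expect is isolating the right five edges and the right initial Dodgson expansion so that the surviving spanning-forest terms are few and the cap's symmetry group is rich enough to kill all of them; if the naive choice leaves a term that is symmetric and also connectivity-admissible, one gets a nonzero recursive contribution rather than an outright zero, and the argument would have to be upgraded to a genuine recurrence in $|V(G)|$ (as the text hints was needed for the $3$-toroidal grid, where Case~1 reduced to $\Psi_{H_{n-3}}\Phi^{\{a\},\{b\}}_{H_{n-3}}$ and then cancelled against Case~2). So the real work is checking that, for the capped end specifically, the cap's extra symmetry makes even that recursive term vanish immediately --- which is plausible given the remark in the text that ``our arguments essentially only used the symmetries from one X of the ladder.'' A secondary technical point is handling the smallest cases near $|V(G)| = 7$ by hand, since the recursive peeling needs enough vertices for $H$ to still look like a shorter X-ladder of the same shape, and the bound $|V(G)| \geq 7$ is presumably exactly the threshold for that.
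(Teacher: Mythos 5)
Your overall strategy --- reduce a Dodgson product to spanning forest polynomials on a trimmed graph $H$, count edge assignments via Lemma~\ref{lem c2 coeff}, and cancel everything modulo $2$ against a mirror symmetry of $H$ --- is the same method the paper uses. However, as written the proposal does not prove the proposition: the decisive step, namely verifying that every admissible assignment fails to be fixed by the chosen symmetry (equivalently, that every symmetry-fixed assignment is forced to be inadmissible), is precisely what you defer as ``the claim to establish'' and ``the real work,'' and you explicitly leave open the fallback that a symmetric, connectivity-admissible term survives, in which case you would only get a recurrence rather than the stated vanishing. Until that check is carried out for a concrete choice of edges, a concrete expansion into spanning forest polynomials, and a concrete symmetry, no conclusion about $c_2^{(2)}(G)$ follows; the entire content of the proposition lives in that verification, which your sketch does not perform.

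For comparison, the paper's execution is considerably leaner than what you anticipate. It does not use five edges, the $5$-invariant, or a case analysis over horizontal and vertical flips of the end X: it takes the three edges $1,2,3$ at the degree-three vertex $b$ and uses formula (1) of Proposition~\ref{prop calculate c2}, computing $\Psi_2^{1,3}=\Phi^{\{a,c\},\{b\},\{d\}}$ and $\Psi^{12,32}=\Phi^{\{a,c,d\},\{b\}}$; deleting $1,2,3$ isolates $b$, and on the remaining graph $H$ of figure~\ref{fig anti H} the two factors become $\Psi_H$ and $\Phi_H^{\{a,b\},\{z\}}$. A single transposition symmetry of $H$ (swapping the vertices labelled $c$ and $d$) then finishes the argument: an assignment fixed by this swap would have to place the edge pair $1,2$ or the pair $3,4$ of $H$ into one factor (all four in one factor gives a cycle), and neither pair can legally be assigned to $\Psi_H$ or to $\Phi_H^{\{a,b\},\{z\}}$, so every assignment cancels with its mirror image and $c_2^{(2)}(G)=0$ outright --- no recurrence, no separate treatment of small cases, with $|V(G)|\geq 7$ simply the smallest decompleted capped X-ladder. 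So your plan is pointed in the right direction, but it both overcomplicates the setup and, more importantly, stops short of the one verification that constitutes the proof.
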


\begin{figure}[h]
		\includegraphics[scale=0.75]{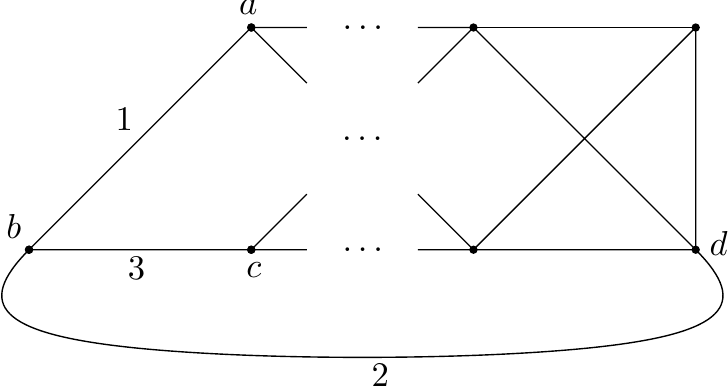}
		\caption{$G$}\label{fig anti X}
\end{figure}

When $|V(G)|=7$, this corresponds to $P_{6,3}$ in \cite{Schnetz2010}. \\

\begin{proof} We calculate 
	\[\Psi_2^{1,3}=\Phi^{\{a,c\},\{b\},\{d\}}\]
	\[\Psi^{12,32}=\Phi^{\{a,c,d\},\{b\}}\]
	When we remove edges $1,2,3$, vertex $b$ will be disconnected. On the graph $H$ in figure~\ref{fig anti H}, the polynomials are $\Psi_H$ and $\Phi_H^{\{a,b\},\{z\}}$. \\
	\begin{figure}[h]
		\includegraphics[scale=0.75]{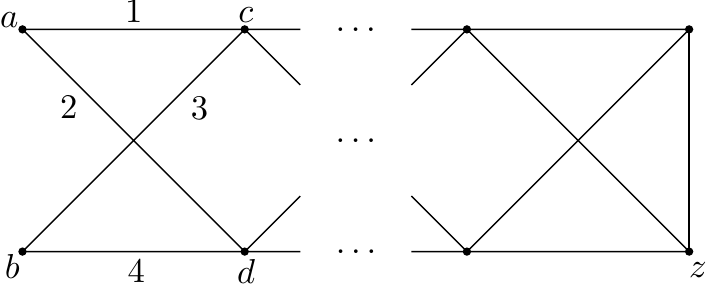}
		\caption{$H$}\label{fig anti H}
	\end{figure} 
	Notice that $H$ is invariant under swapping of vertices $c,d$.  As usual proceed by edge assignments according to lemma~\ref{lem c2 coeff} Any edge assignment that is not invariant under this swap is irrelevant modulo 2. We show that there are no such invariant assignments. \\
	If there were such an assignment, one factor would need edges $1,2$ or $3,4$. Excluding the assignment where $1,2,3,4$ appear in one term (which would create a cycle and so is invalid), these are the only possibilities. \\
	Of course, neither $1,2$ nor $3,4$ can be assigned to either $\Psi_H$ or $\Phi_H^{\{a,b\},\{z\}} $. \\ \\
	Therefore, $c_2^{(2)}(G)=0$ for $|V(G)|\geq7$.  
\end{proof}

\begin{proposition}
	Let $G$ be a decompleted symmetric X-ladder, with $|V(G)|\geq 7$, labelled as in figure \ref{fig sym X}. Then $c_2^{(2)}(G)=0$. \\
\end{proposition}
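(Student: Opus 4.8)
The plan is to mimic the proof just given for the decompleted capped X-ladder, adjusting only for the difference at the ends of the ladder. First I would fix a labelling of the symmetric X-ladder $G$ as in Figure~\ref{fig sym X}, choosing the five edges $i,j,k,l,m$ of the $5$-invariant (or the four edges for the Dodgson form in Proposition~\ref{prop calculate c2}(2)) to sit inside \emph{one} end $X$ of the ladder, exactly as edges $1,2,3,4$ were chosen in the capped case. The point of working at one end is that the combinatorial argument is local: the rest of the ladder enters only as an unspecified graph $H$ (or a recursively defined $H_n$) attached along a fixed boundary, and the symmetries we exploit are the reflection symmetries of that single terminal $X$, which the symmetric capping does not destroy.

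Next I would carry out the translation to spanning forest polynomials via Proposition~\ref{prop sp for}, obtaining for the chosen Dodgson pair expressions of the form $\Psi_H$ times a spanning forest polynomial $\Phi_H^{P}$ with $P$ a partition of the attaching vertices, together possibly with terms that vanish identically (because the corresponding forest cannot become a tree in one of the contracted/deleted graphs), just as $\Psi^{123,345}=0$ was discarded before. Then, invoking Lemma~\ref{lem c2 coeff}, the computation of $c_2^{(2)}(G)$ becomes the count modulo $2$ of edge assignments: each edge of $H$ must be placed once, either into the spanning tree realizing $\Psi_H$ or into the spanning forest realizing $\Phi_H^{P}$. I would then run the same case analysis on the handful of edges of the terminal $X$: in each case the symmetry of $H$ under the reflection of that $X$ (swapping the appropriate vertex pairs) sends a valid assignment to another valid assignment, and the only assignments fixed by the reflection are shown to be impossible (they would either create a cycle in $\Psi_H$ or disconnect a vertex in one of the two factors). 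Hence all contributions cancel in pairs modulo $2$ and $c_2^{(2)}(G)=0$.

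The one genuine wrinkle — flagged already in the excerpt's remark that "all decompletions but one" are isomorphic — is that a symmetric X-ladder, unlike a capped one, may have a non-unique decompletion, so I would first either specify which vertex $v$ of the completion $\Gamma$ is removed or check that the argument is insensitive to this choice; the natural thing is to pick $v$ away from the terminal $X$ we are using, so that the local structure near that $X$ is unchanged. After that, the only difference from the capped proof is that the partition $P$ attached to $H$ and the precise list of boundary vertices of the terminal $X$ differ slightly (the symmetric end has an extra vertex and one more edge compared with the capped end), so the case analysis has one or two more subcases; but the mechanism — reflection symmetry of the end $X$ killing everything modulo $2$ — is identical.

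The main obstacle I anticipate is purely bookkeeping rather than conceptual: getting the labelling of the symmetric end consistent between the original graph $G$ and the reduced graph $H$, and being careful that the spanning forest partition $P$ produced by Proposition~\ref{prop sp for} is the right one (including correctly discarding the vanishing Dodgson term), so that the reflection symmetry is visibly a symmetry of $H$ together with $P$. Once that is set up correctly, the cancellation argument should go through essentially verbatim from the capped case, which is exactly the sense in which the excerpt says the method "applies with minor modifications to the symmetric X-ladders."
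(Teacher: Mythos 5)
Your plan is essentially the paper's own proof: the paper takes the four edges incident to the tip vertex of one end X, uses Proposition~\ref{prop calculate c2}(2), rewrites $\Psi^{12,34}$ and $\Psi^{13,24}$ via Proposition~\ref{prop sp for} as spanning forest polynomials on the reduced graph $H$ (edges $1,2,3,4$ and the isolated vertex removed), and then cancels every edge assignment modulo $2$ using the symmetry of $H$ swapping a vertex pair, exactly the mechanism you describe. The only cosmetic difference is that here neither factor is $\Psi_H$ --- each Dodgson polynomial becomes a sum of two two-part spanning forest polynomials --- but the cancellation step (an invariant assignment would have to place the swapped edge pair $1,2$ or $3,4$ of $H$ into one factor, which is impossible for every term) is precisely your reflection-symmetry argument, so your proposal matches the paper's route.
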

\begin{figure}[h]
	\includegraphics[scale=0.75]{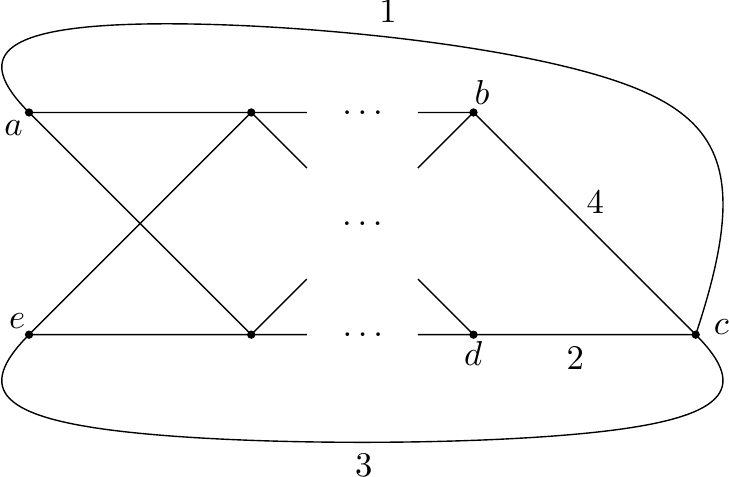}
	\caption{$G$}\label{fig sym X}
\end{figure}
\begin{proof}
We calculate 
\[\Psi^{12,34}=\pm\Phi^{\{a,b\},\{c\},\{d,e\}}\pm\Phi^{\{a,e\},\{c\},\{b,d\}}\]
\[\Psi^{13,24}=\pm\Phi^{\{a,b\},\{c\},\{d,e\}}\pm\Phi^{\{a,d\},\{c\},\{b,e\}}\]
We remove edges $1,2,3,4$ and the isolated vertex $c$. On the graph $H$ in figure \ref{fig sym H}, we get $\pm\Phi^{\{a,y\},\{b,z\}}\pm\Phi^{\{a,b\},\{y,z\}}$ and $\pm\Phi^{\{a,y\},\{b,z\}}\pm\Phi^{\{a,z\},\{b,y\}}$. \\
\begin{figure}[h]
	\includegraphics[scale=0.75]{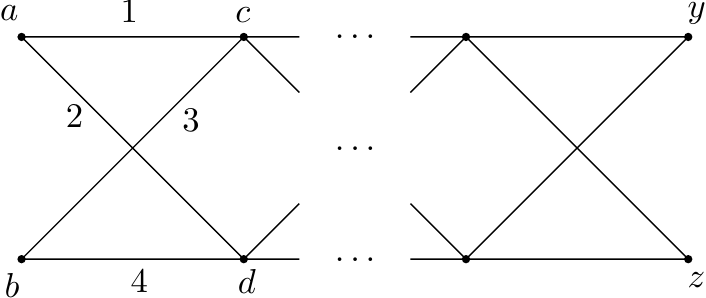}
	\caption{$H$}\label{fig sym H}
\end{figure}
As in the previous proposition, the graph $H$ is invariant under swapping of vertices $c$ and $d$. Therefore, for a non-zero contribution, we must be able to assign either edges $1,2$ or $3,4$ to one of the polynomials above. However, neither of these represent a valid assignment of edges to any of the above terms. \\ \\
Therefore, $c_2^{(2)}(G)=0$ for $|V(G)|\geq 7$. 
\end{proof}

\section{Tractability of these Methods}
Thus far, we have looked only at $c_2$ invariants when $p=2$. Indeed, using these methods at $p=3$ and above yields far too many cases than can be conveniently calculated without the aid of a computer. However, with such aid, the $c_2$ invariant at any fixed prime $p$ can be calculated for any sufficiently recursive family of graphs in a finite amount of time for all graphs of the family.  This generalizes results of \cite{Yeats2016} which had the finiteness result but only for certain families of circulants.  The results of \cite{Yeats2016} artificially and unnecessarily restricted the number of initial edges before the recursive structure begins and so applied to a vastly smaller class of graphs.

Note that the $c_2$ invariants calculated in the present paper were all $0$ so we always found complete cancellations.  The general picture is a little different.  Proposition~\ref{prop 3tor} gives the best illustration: the two cases each reduced the product of polynomials on $H_n$ to a similar product of polynomials on $H_{n-3}$.  In the case of proposition~\ref{prop 3tor} the two cases cancelled but in general they need not and so we would obtain a recurrence.  \cite{Yeats2016} gives other explicit examples where such recurrences are necessary, but the method holds much more generally than was appreciated therein.

We first take the notion of a recursively constructible family of graphs from \cite{NoyRibo2004}.  Intuitively a recursively constructible family of graphs is a family which is built from an initial graph by a repeated fixed sequence of certain basic graph operations.  Edge deletion is included in the allowable operations so in particular each element of the family can have edges connecting back to the initial piece.

We need to formalize this notion, see \cite{NoyRibo2004} section 2.  Given a graph $G$ and a set $U\subseteq V(G)$ let $N_G(U)$ be the neighbourhood of $U$ in $G$, that is the set of vertices of $G$ adjacent to some vertex in $U$.
\begin{definition}
  A sequence of graphs $\{G_n\}_{n\geq0}$ is a \textbf{recursively constructible family of graphs} if there exists a positive integer $r$ and a labelled graph $M$ such that
  \begin{itemize}
  \item $V(G_0) = W_0$, $E(G_0)=E_0$.
  \item $V(G_n) = V(G_{n-1})\cup W_n$.
  \item $N_{G_n}(W_n)\subseteq W_0 \cup \left(\bigcup_{i=0}^{r}W_{n-i}\right)$ for $n> r$.
  \item $E(G_n) = (E(G_{n-1})- S) \cup E_n$ where $S \subseteq \bigcup_{i=1}^rE_{n-i}$
  \item The graph induced by $W_0\cup\left(\bigcup_{i=0}^{r}W_{n-i}\right)$ in $G_n$ equals $M$ for $n>r$.
  \end{itemize}
\end{definition}
Note that the ``equals'' in the last point is not isomorphism; the labels must also match with the graph induced by $W_n$ in each $G_n$ always being the same as a labelled graph.

Noy and Rib\'{o} observe (\cite{NoyRibo2004} section 2) that these conditions imply that the operations used to move from $G_{n-1}$ to $G_n$ for $n>r$ in a recursive family can only be the following.
\begin{itemize}
\item Adding vertices ($W_n$) and edges ($E_n$) incident only to vertices in $W_0\cup\left(\bigcup_{i=0}^{r}W_{n-i}\right)$ in a way which is independent of $n$.
\item Removing edges with one end in $W_0$ and the other end in $\bigcup_{i=0}^{r}W_{n-i}$
\end{itemize}

%

Note that every family of graphs in this paper (either before or after decompletion) is a recursively constructible family of graphs.
Before proving our algorithmic theorem, we give the following lemma. 
\begin{lemma}\label{lem process edges}
	Given a spanning forest polynomial on a graph $G$, any assignment of edges yields some sum of spanning forest polynomials on the graph with those edges removed and any isolated vertices removed.  Furthermore, the vertices involved in the partitions defining the new spanning forest polynomials involve only vertices already in partitions for the input spanning forests and vertices incident to the assigned edges.
\end{lemma}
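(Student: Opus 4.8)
The plan is to unwind the definition of a spanning forest polynomial and track what a single edge assignment does to it, then iterate. Recall that $\Phi_G^P = \sum_F \prod_{e\notin F}\alpha_e$, where $F$ ranges over spanning forests of $G$ whose trees are in bijection with the parts of $P$ respecting the prescribed vertex memberships. ``Assigning'' a set of edges $A$ means, in the Chevalley--Warning counting setup of Lemma~\ref{lem c2 coeff}, picking out exactly those monomials in which each $\alpha_e$, $e\in A$, appears with the intended exponent (here, to the first power, since we work modulo $2$ and each variable must be split between the two polynomials). So the first step is to make precise that ``assigning edge $e$ to $\Phi_G^P$'' is the operation of extracting the part of $\Phi_G^P$ that is linear in $\alpha_e$ and then setting $\alpha_e = 1$; equivalently, it keeps only those forests $F$ with $e\notin F$, i.e.\ $e$ is cut.

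Next I would handle a single assigned edge $e=uv$, splitting into two cases according to whether $e$ is a loop or not (in the graphs at hand $e$ will not be a loop, but for the general lemma it is cleanest to note a loop contributes nothing new). For $e$ not a loop: a spanning forest $F$ with $e\notin F$ is exactly a spanning forest of $G\backslash e$ in which $u$ and $v$ lie in different trees. If in $P$ the vertices $u,v$ already lie in the same part, there is no such $F$ and the result is $0$. Otherwise, the forests counted are precisely the forests of $G\backslash e$ realizing the partition $P$, already viewed on $G\backslash e$; if $u$ or $v$ was not previously constrained by $P$ we must now also sum over which part it joins, but this is just a finite sum of spanning forest polynomials $\Phi_{G\backslash e}^{P'}$ where each $P'$ is obtained from $P$ by (possibly) adding $u$ and/or $v$ to one of the existing parts or as a new singleton part, and then deleting $e$. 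Crucially, the only vertices appearing in any $P'$ are those already in $P$ together with the endpoints of $e$. If deleting $e$ isolates a vertex $w$ (necessarily $w\in\{u,v\}$ or $w$ had degree $1$ via $e$), then in any surviving forest $w$ is its own tree, so $w$ must be a singleton part; we record this and then remove $w$ from the graph, obtaining a spanning forest polynomial on $G\backslash e$ with $w$ deleted whose partition still only involves the allowed vertices.

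Then I would iterate: assign the edges of $A$ one at a time. After processing the first edge we have a finite $\mathbb{Z}$-linear combination of spanning forest polynomials on $(G\backslash e)$ with any isolated vertex removed, each with a partition on a vertex set contained in (original partition vertices) $\cup$ (endpoints of $A$). Apply the single-edge step to each summand with the next edge of $A$; linearity lets us push through a sum, and at each stage the set of vertices that can appear in a partition only ever grows by endpoints of $A$ and never by anything else, and the underlying graph only loses edges of $A$ and isolated vertices. After all of $A$ is processed we obtain a finite sum of spanning forest polynomials on $G\backslash A$ with isolated vertices removed, with partitions involving only the claimed vertices, which is the statement.

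The main obstacle, and the only point needing care, is the bookkeeping in the single-edge step when an endpoint of $e$ was not previously named in $P$: one must check that ``a spanning forest of $G$ with $e$ cut, realizing $P$'' genuinely decomposes as the asserted finite sum of spanning-forest conditions on $G\backslash e$, with no double counting and with the isolated-vertex case correctly forcing a singleton part; and one must confirm that this rewriting is exactly the operation the earlier proofs (e.g.\ Proposition~\ref{prop 3tor}) were performing when they said things like ``this becomes $\Phi_{H_{n-3}}^{\{a\},\{b\}}$.'' Everything else is a routine induction on $|A|$. No genuinely hard estimate or construction is involved; the content is the precise statement that edge assignment is a well-defined, closed operation on the class of spanning forest polynomials, with controlled growth of the relevant vertex set.
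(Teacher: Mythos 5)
Your proposal misses the substantive half of the lemma. In the paper's usage (and in the way the lemma is applied in Theorem~\ref{thm finite}), ``assigning'' the edges of a product of spanning forest polynomials means that each processed edge ends up \emph{in} the forest structure of exactly one factor and is cut in the others; so for a single factor $\Phi_G^P$ the lemma must handle two operations: cutting $e$ (keeping the monomials divisible by $\alpha_e$, which is plain deletion and immediately gives $\Phi_{G\backslash e}^P$, up to the isolated-vertex bookkeeping) and \emph{contracting} $e$ (keeping the monomials not divisible by $\alpha_e$, which gives $\Phi_{G/e}^{P'}$ with the endpoints identified). The whole point of the lemma is the second case: since the recursion needs every resulting polynomial to live on the common graph with the processed edges \emph{removed}, one must re-express $\Phi_{G/e}^{P'}$ on $G\backslash e$, and this is done by splitting the part (or the tree) containing the contracted endpoints into two pieces, one containing $u$ and the other containing $v$, summing over all such refinements --- this is exactly where the ``sum of spanning forest polynomials'' and the new partition vertices (endpoints of assigned edges) come from. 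Your proposal never treats this case at all: you define assignment as extracting the part linear in $\alpha_e$, i.e.\ only the cut operation, so the edges that end up in the forest structure of the factor are never processed.

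Moreover, the one case you do treat is described incorrectly. A spanning forest $F$ of $G$ with $e\notin F$ is simply a spanning forest of $G\backslash e$ compatible with $P$; there is no constraint that $u$ and $v$ lie in different trees, and no vanishing when $u,v$ share a part --- the constraint you impose is the one that belongs to the contraction-to-deletion rewriting, so you appear to have conflated the two operations. Likewise, when an endpoint is not named in $P$, deletion requires no sum at all, and the sum you propose (adding the endpoint to an existing part \emph{or as a new singleton part}) does not reproduce $\Phi_{G\backslash e}^P$, since a new singleton part changes the number of trees and hence counts forests that are not in the original polynomial. The iteration-over-edges and the isolated-vertex reduction are fine in outline, but without a correct single-edge step for both deletion and contraction the argument does not establish the lemma.
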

\begin{proof}
  We show that one edge deletion satisfies the lemma, as does one edge contraction. Then iterating the process, one can get any possible edge assignment.
  
  If some sequence of these contractions and deletions creates an impossible assignment (one that has cycles, connects vertices from different parts of the partition, etc.) then the corresponding collection of spanning forest polynomials is trivial --- it is equal to zero.
  
	Given some edge $e=\{u,v\}\in G$ and a spanning forest polynomial $\Phi_G^P$, a few cases arise.

        Cutting $e$ is most straightforward; this corresponds to not assigning $e$ to $\Phi_G^P$.  In this case we simply obtain $\Phi_{G\backslash e}^P$.  If this does not result in any isolated vertices then we are done.  If it does then we want to express the result as a spanning forest polynomial on the graph with that vertex $v$ removed.  If $v$ is in no part or in a part of size $>1$ then this cannot occur so we simply get $0$ which is a spanning forest polynomial.  If $v$ is a part by itself, then removing the vertex from the graph and the part from the partition gives the same polynomial now as a spanning forest polynomial on the desired graph.

        Now consider contracting $e$; this corresponds to assigning $e$ to $\Phi_G^P$.  If $u, v$ are in distinct parts of $P$ then we get $0$ which is allowed.

        Next suppose that $u,v$ are together in a part of $P$.  The resulting polynomial is $\Phi_{G/e}^{P'}$ where $P'$ is $P$ with $u,v$ identified.  To interpret this on $G\backslash e$, then, we break apart the tree corresponding to this part in such a way that $u$ and $v$ are in different halves of the tree.  Thus for each partition $P''$ resulting from further partitioning the part of $P$ containing $u$ and $v$ into two parts, one containing $u$ and the other containing $v$ we get the spanning forest polynomial for that partition.

        If one of $u$ or $v$ is in $P$ but the other is not, then we are in almost the same situation as the previous case except that we now need to add the other of $u$ or $v$ to the part containing the first and then further partition that part into two, one containing $u$ and the other containing $v$.

        Finally, suppose neither $u$ nor $v$ is in $P$.  Let $w$ be the vertex corresponding to $u$ and $v$ in $G/e$. Then similarly to the previous cases, this means that the tree containing $w$ in $G/e$ must be broken apart in $G\backslash e$ with $u$ and $v$ in different halves.  We don't know which part of $P$ this tree corresponds to, but if $P = P_1, P_2, \ldots , P_j$ then $\Phi_{G/e}^{P} = \sum_{i=1}^j \Phi_{G/e}^{P_1, \cdots, P_j\cup\{w\}, \cdots, P_j}$ and we can argue as in the previous case on each term of the sum.
%
\end{proof}

With the above lemma, we are able to prove the following result. 

\begin{theorem} Let $\{G_n\}_{n\geq 0}$ be a recursively constructible family of graphs with $2|V(G_n)| = |E(G_n)| + 2$ for $n$ sufficiently large.
The $c_2$ invariant for any fixed prime $p$ can be calculated using these methods in a finite amount of time for all graphs of the family.
  \label{thm finite}
\end{theorem}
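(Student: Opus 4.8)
The plan is to combine Proposition~\ref{prop calculate c2} with Lemma~\ref{lem c2 coeff} and Lemma~\ref{lem process edges} to reduce the computation of $c_2^{(p)}(G_n)$ to tracking a finite-state recursion. First, for $n$ large, pick five distinct edges $i,j,k,l,m$ that lie in the initial piece $W_0$ (or the repeating cell $M$) in a way that is independent of $n$; by Proposition~\ref{prop calculate c2}(3) we have $c_2^{(p)}(G_n) = -[{}^5\Psi_{G_n}(i,j,k,l,m)]_p \bmod p$. Expand ${}^5\Psi$ as a sum of products of two Dodgson polynomials, and convert each Dodgson polynomial into a sum of spanning forest polynomials via Proposition~\ref{prop sp for}; crucially the partitions appearing are supported only on endpoints of $\{i,j,k,l,m\}$, hence on a bounded vertex set near $W_0$, uniformly in $n$. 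Since the hypothesis $2|V(G_n)| = |E(G_n)|+2$ says each product is a polynomial of degree $N = |E(G_n)|$ in $N$ variables, Lemma~\ref{lem c2 coeff} tells us that $[\cdot]_p \bmod p$ is the coefficient of $\prod_e \alpha_e^{p-1}$ in the $(p-1)$-st power of the product, i.e.\ a signed count of ways to distribute, for each edge $e$, exactly $p-1$ ``uses'' of $\alpha_e$ among the $p-1$ copies of the two spanning forest polynomials.

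Next I would set up the recursion. Process the edges of $G_n$ in the order dictated by the recursive construction: the edges of $W_0$ first, then the blocks $E_{n}, E_{n-1}, \dots$ one repeating cell at a time, peeling from the ``far'' end back toward $W_0$. By Lemma~\ref{lem process edges}, assigning (across all $2(p-1)$ tensor factors) the edges of one repeating cell to the various spanning forest polynomials, and then deleting those edges and any resulting isolated vertices, turns each spanning forest polynomial on $G_n$ into a sum of spanning forest polynomials on $G_{n-1}$ (more precisely on $G_n$ with one cell removed), and — this is the key finiteness point — the partitions of these new spanning forest polynomials are supported only on the bounded set of vertices that the removed cell was adjacent to, namely the at most $|W_0| + |\bigcup_{i=0}^{r}W_{n-i}|$ vertices of the interface, a number bounded independently of $n$. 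Because both the graph structure of each cell and this interface are eventually constant (equal to $M$, as labelled graphs), the ``state'' after processing down to $G_{n-j}$ is: a formal $\mathbb{F}_p$-linear combination of tuples (one spanning forest polynomial per tensor factor) whose partitions live on a fixed finite vertex set. There are only finitely many such tuples, so the state lives in a fixed finite-dimensional $\mathbb{F}_p$-vector space, and one cell-removal step acts on it by a fixed $\mathbb{F}_p$-linear transformation $T$ (computable by finitely many applications of Lemma~\ref{lem process edges}).

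Finally I would extract the invariant. Starting from the explicit state obtained after processing the edges of $W_0$ and the finitely many ``boundary'' cells near the ends (which may differ from the generic cell, but there are only finitely many of these and they only need to be handled once), we apply $T$ repeatedly; the desired coefficient of $\prod \alpha_e^{p-1}$ is read off once all edges are consumed, i.e.\ it is a fixed linear functional applied to $T^{j}(\text{initial state})$ for the appropriate $j$ depending on $n$. Since $T$ is a linear map on a finite-dimensional space over $\mathbb{F}_p$, its powers are eventually periodic, so $\{c_2^{(p)}(G_n)\}_n$ is eventually periodic in $n$ and is computed by forming $T$, its powers up to the (effectively bounded) period, and the initial and boundary states — all finite data. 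That establishes the theorem.

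The main obstacle, and the step needing the most care, is the bookkeeping that makes the ``state space'' genuinely finite and the transition genuinely $n$-independent: one must check that when a repeating cell is peeled off, every spanning forest polynomial produced by Lemma~\ref{lem process edges} really does have its partition supported on the fixed interface vertex set (so that vertices deep inside the already-processed part never reappear in a partition), and that the set of tensor-factor tuples is closed under the peeling operation. This uses both the ``furthermore'' clause of Lemma~\ref{lem process edges} (new partition vertices are only endpoints of assigned edges or pre-existing partition vertices) and the recursive-family axioms bounding $N_{G_n}(W_n)$; handling the finitely many non-generic boundary cells and the choice of $i,j,k,l,m$ inside $W_0$ is routine once this is in place. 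A minor additional point is verifying that the degree/vertex-count hypothesis, which is preserved under the spanning-forest conversions (as remarked after Proposition~\ref{prop calculate c2}), indeed licenses the use of Lemma~\ref{lem c2 coeff} on each product term.
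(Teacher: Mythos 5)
Your proposal is correct and takes essentially the same route as the paper's proof: reduce $c_2^{(p)}$ via Proposition~\ref{prop calculate c2} and Lemma~\ref{lem c2 coeff} to counting edge assignments on products of $2(p-1)$ spanning forest polynomials, peel one repeating cell at a time with Lemma~\ref{lem process edges} so that all partitions stay supported on the bounded vertex set of $M$ (plus $W_0$), and observe that the induced transition on the finite set of partition tuples is independent of $n$, giving a finite system of linear recurrences with directly computable base cases. The only differences are cosmetic: the paper takes its 3--5 special edges from the deleted set $S$ (with a fallback when $|S|<3$) rather than from $W_0$ or the repeating cell, and it phrases the conclusion as solving the linear recurrence system rather than as eventual periodicity of powers of your transition map $T$.
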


Note that the condition $2|V(G_n)| = |E(G_n)| + 2$ is to guarantee the correct relationship between the degree and number of variables for using Lemma~\ref{lem c2 coeff} on the output of Proposition~\ref{prop calculate c2}.

\begin{proof}
Fix $p$.
  Let $G_m\in \{G_n\}_{n\geq 0}$, where $\{G_n\}_{n\geq 0}$ is a recursive family of graphs.  Let the $W_n$, $E_n$, and $S\subseteq \bigcup_{i=1}^r E_{n-i}$ be as in the definition of a recursively constructible family.  Let $H_n = G_n \backslash S$.

To begin with, assume that $|S|\geq 3$ and $m$ is sufficiently large that $2|V(G_m)| = |E(G_m)| + 2$ and $m>r$.

Starting with the Kirchhoff polynomial of $G_m$, by Proposition~\ref{prop calculate c2} and Lemma~\ref{lem c2 coeff} we can process between 3 and 5 edges of $S$ in order to calculate $c_2^{(p)}(G_m)$ by counting assignments of edges to certain products of $2(p-1)$ spanning forest polynomials.  Using Lemma~\ref{lem process edges} to assign the remaining edges of $S$ we can obtain an expression for $c_2^{(p)}(G_m)$ as a sum of edge assignments to products of spanning forest polynomials of $H_m$.  Furthermore, the vertices involved in the partitions can only be endpoints of edges in $\bigcup_{i=1}^r E_{m-i}$, that is they must be vertices of the copy of $M$ for $G_m$.

Next, using Lemma~\ref{lem process edges}, assign the edges of $E_m$ which are not already assigned.  The remaining graph is now $H_{m-1}$.  On the polynomial side, each summand from the sum of products of spanning forest polynomials on $H_m$ has itself become a sum of products of spanning forest polynomials on $H_{m-1}$ and in both cases the vertices involves in the partitions must be in the appropriate copy of $M$.  We can do the same for any product of $p-1$ spanning forest polynomials on $H_{m}$ which involve only vertices of $M$ regardless of whether it appeared in the expansion of $c_2^{(p)}(G_m)$.

$M$ is a finite graph and so has a finite number of vertices.  So there are only a finite number of partitions of subsets of these vertices.  So there are also only finitely many lists of $2(p-1)$ such partitions.  Call this set of products of partitions $\mathcal{C}$.  We can view any product of $2(p-1)$ spanning forest polynomials on $H_m$ involving only vertices of $M$ as being such a list of partitions.  Therefore the map described in the previous paragraph which takes a product of $2(p-1)$ spanning forest polynomials on $H_m$ to a sum of such products on $H_{m-1}$ is a map from $\mathcal{C}$ to itself.  This map is independent of $m$ for $m$ sufficiently large because of the recursive structure of the family.

For $c\in \mathcal{C}$, let $a_{m,c}$ correspond to the cardinality over $\mathbb{F}_p$ of the variety defined by the vanishing of the product of spanning forest polynomials corresponding to $c$ at the level of $H_m$.  Then the map described above gives a system of linear recurrences relating the $a_{m,c}$ with the $a_{m-1, c}$.  Such a system is always solvable by standard finite techniques.
The values for any finite number of small values of $m$ can be computed directly in a finite amount of time by counting edge assignments or by working directly with the variety.  Hence we can obtain the base cases for the recursion and deal with $m\leq r$ and any other $m$ which is too small.
Furthermore the particular linear combination of $a_{m,c}$ giving $c_2^{(p)}(G_m)$ is also independent of $m$ for $m$ sufficiently large because of the recursive structure of the family.

Therefore, the solution to the system of linear recurrences gives an expression for $c_2^{(p)}(G_m)$ for all $m$.

Now suppose $|S|<3$. For $m$ sufficiently large, we can assign the edges of $S$ and also $E_m,E_{m-1},\dots$ as needed to assign 3 to 5 edges. This case then follows from the above.

\end{proof}

\section{Conclusion}
Non-skew toroidal grids are a large family of graphs for which we now know $c_2^{(2)}=0$.  Previous families with $c_2=0$ were known by double triangle or by small edge or vertex cuts (see \cite{BrownSchnetz2012, BrownSchnetzYeats2014}), but none of these apply to the toroidal grids. Other than the non-skew toroidal grid $\gamma_3\times \gamma_3$, we do not know if non-skew toroidal grids have $c_2^{(p)}=0$ for primes $p>2$.  Either way would be interesting giving either a new family of weight drop graphs (see \cite{BrownYeats2011} for more on weight in this sense) or giving a family of graphs with $c_2^{(2)}=0$ for reasons other than weight drop.

The $X$-ladder result hints at how some structure of higher weight drops may be visible to these techniques as the larger the ladder the more independent symmetries forced $c_2^{(2)}$ to be $0$.

Finally, in view of theorem~\ref{thm finite} we have in-principal algorithms for calculating $c_2$ invariants for a much larger class of families of graphs than was known before.  Unfortunately these algorithms grow exponentially in every interesting parameter, so they are not practical unless further simplifications can be found.  Some small cases with $p=2$ or $p=3$ are probably tractable and would be a good testing ground for the possibility of finding simplifications.  Furthermore, these in-principle algorithms tell us something about the kinds of solutions which can appear -- they must come from solving systems of recurrences.  This strongly restricts the kinds of sequences which can appear and is in striking contrast to the sequences which can appear when the graph is fixed and $p$ varies, see \cite{BrownSchnetz2013}.

\end{document}